\documentclass[11pt]{article}
\usepackage{amsmath,amssymb,amsthm, epsfig}
\usepackage{amsfonts}
\usepackage{amsmath}
\usepackage{amssymb}
\usepackage{color}
\usepackage[mathscr]{eucal}



\title{Optimal approximate fixed point results in locally convex
spaces}

\author{\textsc{C. S. Barroso}\footnote{Supported in part by CNPq-Brazil, Grant 307210/2009-0.},\quad
\textsc{O. F. K. Kalenda}\footnote{Supported by the Research grant GA \v{C}R P201/12/0290.}\quad and\quad \textsc{M. P. Rebou\c cas}\footnotemark[1]}



\newlength{\hchng}
\newlength{\vchng}
\setlength{\hchng}{0.55in} \setlength{\vchng}{0.55in}
\addtolength{\oddsidemargin}{-\hchng}
\addtolength{\textwidth}{2\hchng}
\addtolength{\topmargin}{-\vchng}
\addtolength{\textheight}{2\vchng}



\def \N {\mathbb{N}}
\def \R {\mathbb{R}}

\def \dist {\mathrm{dist}}

\def \T {\mathscr{T}}
\def\spann{\hbox{\rm span}\,}

\def\co{{\rm{co}}}




\newtheorem{theorem}{Theorem}[section]
\newtheorem{lemma}[theorem]{Lemma}
\newtheorem{proposition}[theorem]{Proposition}
\newtheorem{corollary}[theorem]{Corollary}
\newtheorem{question}[theorem]{Problem}
\theoremstyle{definition}
\newtheorem{definition}[theorem]{Definition}
\newtheorem{example}[theorem]{Example}

\numberwithin{equation}{section}




\newcommand{\intav}[1]{\mathchoice {\mathop{\vrule width 6pt height 3 pt depth  -2.5pt
\kern -8pt \intop}\nolimits_{\kern -6pt#1}} {\mathop{\vrule width
5pt height 3  pt depth -2.6pt \kern -6pt \intop}\nolimits_{#1}}
{\mathop{\vrule width 5pt height 3 pt depth -2.6pt \kern -6pt
\intop}\nolimits_{#1}} {\mathop{\vrule width 5pt height 3 pt depth
-2.6pt \kern -6pt \intop}\nolimits_{#1}}}



\date{}

\begin{document}
\maketitle

\begin{abstract}
Let $C$ be a convex subset of a locally convex space. We provide  optimal approximate fixed point results for sequentially continuous maps $f\colon C\to\overline{C}$. First we prove that if $f(C)$ is totally bounded, then it has an approximate fixed point net. Next, it is shown that if $C$ is bounded but not totally bounded, then there is a uniformly continuous map $f\colon C\to C$ without approximate fixed point nets. We also exhibit an example of a sequentially continuous map defined on a compact convex set with no approximate fixed point sequence. In contrast, it is observed that every affine (not-necessarily continuous) self-mapping a bounded convex subset of a topological vector space has an approximate fixed point sequence. Moreover, it is constructed a affine sequentially continuous map from a compact convex set into itself without fixed points.

\medskip

\noindent \textit{MSC:} 47H10, 54H25.

\medskip

\noindent \textbf{Keywords:} Approximate fixed point net, approximate fixed point sequence, totally bounded set, sequentially continuous map, uniformly continuous map.

\end{abstract}


\section{Introduction}\label{sec:1}

The well-known Schauder-Tychonoff theorem says that any continuous selfmap of a nonempty compact convex subset of a Hausdorff locally convex space has a fixed point. Since fixed point theorems have many
applications in the theory of differential equations and elsewhere, the search for new ones continues. There are several types of such results -- one can relax the compactness assumption, the continuity assumption or the conclusion that there is a fixed point.

If one relaxes the compactness assumption, the assumption of continuity must be {strengthened}. Indeed, already Klee \cite{klee} proved that any noncompact convex subset of a normed space admits a fixed point free continuous selfmap. On the other hand, there are many noncompact convex subsets of Banach spaces which have the fixed point property for $1$-Lipschitz (i.e., nonexpansive) maps. (There is a large theory concerning this fact.) For Lipschitz maps the situation is different -- by a {strengthening} of the above mentioned result of Klee due to Lin and Sternfeld \cite{LS} no noncompact convex subset of a normed space has the fixed point property for Lipschitz maps.

Another possibility is to relax the continuity assumption. We will pay attention to the sequential continuity. In \cite[Theorem 1]{AGP} the authors show that if $C$ is a nonempty weakly compact subset of a metrizable locally convex space $X$, then any weakly sequentially continuous selfmap of $C$ has a fixed point. This theorem has a lot of {applications} in the theory of differential equations and the authors ask whether the metrizability assumption can be dropped. The proof of their theorem is based on showing that any weakly sequentially continuous selfmap of $C$ is in fact weakly continuous (due to the angelicity of the weak topology of metrizable locally convex spaces) and reducing the result to the Schauder-Tychonoff theorem.

We show, in particular, that {sequential} continuity is very weak condition. On the positive part, we show that the metrizability condition can be dropped if $C$ is sequentially compact and $f$ is affine. On the other hand, if $C$ is compact and $f$ sequentially continuous and affine, $f$ need not have any fixed point. The underlying space is $(\ell_\infty^*,w^*)$ and an essential role is played, of course, by the Grothendieck property of $\ell_\infty$.

So, sequential continuity is too weak to ensure the existence of fixed points. However, we show that for approximate fixed points it is strong enough. Let us recall the definitions.

 Let $C$ be a nonempty convex subset of a topological vector space $X$. An approximate fixed point sequence for a map $f\colon C\to\overline{C}$ is a sequence $(x_n)$ in $C$ so that $x_n- f(x_n)\to 0$. Similarly, we can define approximate fixed point nets for $f$. Let us mention that $f$ has an approximate fixed point net if and only if
$$0\in\overline{\{x-f(x):x\in C\}}.$$
Finding approximate fixed points for continuous maps is an interesting topic in fixed point theory, and a great number of results have been proposed in the literature, among which we mention e.g. Hazewinkel and van de Vel \cite{HV}, Had\v{z}i\'c \cite{Hadzic1,Hadzic2}, Idzik \cite{Idzik1,Idzik2}, Lin-Sternfeld \cite{LS} and Park \cite{Park2}. We refer also the reader to \cite{Barroso, Barroso-Lin,BKL} for several results related to the weak-approximate fixed point sequences for continuous maps.

In particular, in \cite{LS} it is proved that a convex subset $C$ of a normed space is totally bounded if and only if any Lipschitz selfmap of $C$ has an approximate fixed point sequence.
A generalization of the `only if' part is a result of \cite{Park2} saying, in particular, that any continuous selfmap of a convex subset of a locally convex space with totally bounded range has an approximate fixed point net.

In the present paper, we first extend the result of \cite{Park2} to sequential continuous mappings. This extension is allowed by the fact that the problem can be reduced to a finite-dimensional problem and in a finite-dimensional space sequential continuity implies continuity. We further show that sequential continuity does not help in obtaining approximate fixed point sequences even for selfmaps of compact convex sets.

Finally, we extend the 'if part' of the above quoted result of \cite{LS} to locally convex spaces. We show that any nonempty bounded convex subset of a locally convex space which is not totally bounded admits a uniformly continuous selfmap without approximate fixed point net.

\section{The results}\label{sec:2}

Our main positive result is the following theorem.

\begin{theorem}\label{thm:M1} Let $C$ be a convex subset of a locally convex space $X$ and $f\colon C\to \overline{C}$ be a sequentially continuous map such that $f(C)$ is totally bounded.
Then $f$ has an approximate fixed point net.
\end{theorem}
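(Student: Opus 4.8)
The plan is to reduce the infinite-dimensional problem to a finite-dimensional one, where sequential continuity is equivalent to continuity and classical fixed point theory (Brouwer) applies. Fix a continuous seminorm $p$ on $X$ and $\varepsilon>0$; it suffices to produce some $x\in C$ with $p(x-f(x))<\varepsilon$, since this exhibits a point of $\{x-f(x):x\in C\}$ inside every basic neighbourhood of $0$, and hence $0$ lies in the closure. Since $f(C)$ is totally bounded, it can be covered by finitely many $p$-balls of radius $\varepsilon/3$ centered at points $y_1,\dots,y_n\in f(C)\subseteq\overline C$; because each $y_i$ is a limit point of $C$ (or by perturbing slightly) we may and do assume the centers actually lie in $C$, after shrinking the radius to $\varepsilon/2$. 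Let $D=\co\{y_1,\dots,y_n\}\subseteq C$, a compact convex subset of a finite-dimensional affine subspace.

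Next I would build a continuous finite-dimensional approximation of $f$ in the standard Schauder-projection manner. Let $\{\varphi_i\}_{i=1}^n$ be a partition of unity on $f(C)$ subordinate to the cover by the $p$-balls $B_p(y_i,\varepsilon/2)$, and define the ``Schauder projection'' $P\colon f(C)\to D$ by $P(z)=\sum_{i=1}^n\varphi_i(z)y_i$. Then $p(P(z)-z)<\varepsilon/2$ for every $z\in f(C)$, because $P(z)$ is a convex combination of centers $y_i$ each within $\varepsilon/2$ of $z$ in the seminorm $p$. Now consider the composition $g=P\circ f\colon D\to D$. The key observation is that $P$ is genuinely continuous (the $\varphi_i$ are continuous on the topology of $X$ restricted to $f(C)$), and $f$ is sequentially continuous; since $D$ is a metrizable (indeed finite-dimensional) compact set, a sequentially continuous map from $D$ into a topological space need only be checked on sequences, but I must be slightly careful: $f$ is defined on all of $C$, not just $D$, and sequential continuity of $f$ at points of $D$ uses sequences in $C$, which is fine. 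The composition $g=P\circ f$ restricted to $D$ is then a sequentially continuous self-map of the compact metrizable set $D$; restricted to the finite-dimensional space $\spann D$ it is therefore continuous (on a metrizable space sequential continuity implies continuity), so Brouwer's fixed point theorem yields $x_0\in D$ with $g(x_0)=x_0$, i.e.\ $P(f(x_0))=x_0$.

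Finally I would estimate: $p(x_0-f(x_0))=p(P(f(x_0))-f(x_0))<\varepsilon/2<\varepsilon$, since $f(x_0)\in f(C)$ and $P$ moves points of $f(C)$ by less than $\varepsilon/2$ in $p$. As $p$ and $\varepsilon$ were arbitrary, every basic neighbourhood of $0$ meets $\{x-f(x):x\in C\}$, so $0\in\overline{\{x-f(x):x\in C\}}$, which is exactly the statement that $f$ has an approximate fixed point net.

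**Main obstacle.** The delicate point is the choice of the centers $y_i$: the total boundedness of $f(C)$ gives centers in $f(C)\subseteq\overline C$, but to form $D\subseteq C$ and apply Brouwer inside $C$ we need the centers in $C$ itself. I expect to handle this by first covering $f(C)$ by $p$-balls of radius $\varepsilon/4$ with centers in $f(C)$, then replacing each center by a point of $C$ within $p$-distance $\varepsilon/4$ (possible when the center is a $p$-limit of points of $C$; if $p(z-z')=0$ identifies the center with a point of $C$ one argues modulo the kernel of $p$, i.e.\ works in the seminormed quotient $X/p^{-1}(0)$ throughout, which is the cleanest framing and also makes ``finite-dimensional'' precise). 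The secondary subtlety is ensuring the partition-of-unity functions $\varphi_i$ are continuous for the relevant topology — taking $\varphi_i(z)=\max\{0,\varepsilon/2-p(z-y_i)\}$ normalized does the job, and these are continuous since $p$ is. Everything else is routine.
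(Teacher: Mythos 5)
Your proof is correct, but it takes a genuinely different route from the paper's. The paper derives Theorem~\ref{thm:M1} from a more general statement (Theorem~\ref{thm:M3} via Lemma~\ref{L-seminorm}, valid for almost convex $C$ and a second, weaker locally convex topology): after the same reduction to a single continuous seminorm $\rho$, an $\varepsilon/2$-net $x_1,\dots,x_n\in C$ for $f(C)$ and points $z_i\in C$ close to $x_i$ with $\co\{z_1,\dots,z_n\}\subset C$, it applies a variant of Fan's KKM-type lemma (Lemma~\ref{lem:1}) in which the sets $F(z_i)=\{x\in C:\rho(f(x)-x_i)\ge\varepsilon/2\}$ need only be \emph{sequentially} closed; since $\bigcap_i F(z_i)=\emptyset$, the lemma produces a point $x$ in a convex hull of net points with $\rho(f(x)-x)<\varepsilon$. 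You instead build a Schauder projection $P$ onto $D=\co\{y_1,\dots,y_n\}$ and apply Brouwer to $P\circ f|_D$ --- precisely the alternative the authors mention in the Remark after Lemma~\ref{lem:1} (following the proof of \cite[Theorem 2.1]{BKL}), where they describe the KKM lemma as a substitute for Brouwer's theorem. The KKM route buys a shorter argument with no partition of unity and no discussion of which topology on $D$ makes the composition continuous, and it extends painlessly to almost convex domains and to a weaker range topology; your route is more classical and makes the finite-dimensional reduction completely explicit. Two points to tighten: metrizability of $D$ in the topology induced from $X$ uses Hausdorffness, which the statement does not assume, so it is cleaner to run Brouwer on the coefficient simplex, i.e.\ on the map $t\mapsto\bigl(\varphi_i(f(\sum_j t_j y_j))\bigr)_i$, using that every linear map from $\R^n$ into a topological vector space is continuous (this also makes ``sequential continuity implies continuity'' unproblematic, the domain being the metrizable simplex); and your suggestion to work in $X/p^{-1}(0)$ ``throughout'' should not be taken literally, since $f$ need not factor through that quotient --- the remark is anyway unnecessary, because a point of $C$ at $p$-distance $0$ from a center is itself an admissible replacement center. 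Finally, recall that a basic neighbourhood of $0$ is given by finitely many seminorms, so pass to their maximum; this is the same silent step as the paper's passage from Lemma~\ref{L-seminorm} to Theorem~\ref{thm:M3}.
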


The same result, under the stronger assumption that $f$ is continuous, was proved by Park in \cite[Corollary 2.1]{Park2}.
We extend his result to the case of sequentially continuous $f$.
The theorem follows immediately from Theorem~\ref{thm:M3} below
which is a common generalization of \cite[Theorem 2.1]{BKL} and \cite[Corollary 2.1]{Park2}.

In case $X$ is metrizable (or, more generally, Fr\'echet-Urysohn), we get even an approximate fixed point sequence. In general, an approximate fixed point sequence need not exist -- even if $C$ is compact (see Example~\ref{ex:1} below) or if $f$ is continuous (this follows from \cite[Example 4.2]{BKL} if we observe that in the weak topology any bounded set is totally bounded).

As a consequence we get the following strengthening of a result of \cite{LS}:

\begin{corollary}\label{cor:LS} Let $C$ be a totally bounded convex subset of a normed space $X$. Then every continuous map $f:C\to C$ admits an approximate fixed point sequence.
\end{corollary}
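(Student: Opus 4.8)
The plan is to obtain this as a direct consequence of Theorem~\ref{thm:M1} together with the metrizability of normed spaces. First I would note that a continuous map is in particular sequentially continuous, and that since $C$ is totally bounded, so is its subset $f(C)\subseteq C$. Thus the hypotheses of Theorem~\ref{thm:M1} are met (with $f\colon C\to C\subseteq\overline{C}$), and the theorem yields an approximate fixed point net for $f$. By the reformulation recalled in the introduction, this is precisely the assertion that
$$0\in\overline{\{x-f(x):x\in C\}}.$$

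Next I would upgrade the net to a sequence using that $X$, being a normed space, is metrizable, so that the closure of any subset of $X$ coincides with its sequential closure. Applying this to the set $A=\{x-f(x):x\in C\}$, the membership $0\in\overline{A}$ provides a sequence $(a_n)$ in $A$ with $a_n\to 0$; writing $a_n=x_n-f(x_n)$ with $x_n\in C$ gives the desired approximate fixed point sequence $(x_n)$.

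I do not anticipate any genuine obstacle: the statement is a formal corollary of Theorem~\ref{thm:M1} and the elementary fact that topological and sequential closures agree in metric spaces. The only point deserving explicit mention is the passage from ``net'' to ``sequence'', which is exactly where metrizability of the ambient space is used and cannot be dispensed with in general, as Example~\ref{ex:1} shows. (One may also remark that, compared with the corresponding direction of the result of \cite{LS}, the hypothesis on $f$ has been relaxed from Lipschitz to merely continuous.)
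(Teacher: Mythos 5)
Your proposal is correct and follows exactly the route the paper intends: the corollary is stated as a consequence of Theorem~\ref{thm:M1} (continuity implies sequential continuity, and $f(C)\subseteq C$ is totally bounded), combined with the remark that in a metrizable space the approximate fixed point net, i.e.\ the membership $0\in\overline{\{x-f(x):x\in C\}}$, can be upgraded to an approximate fixed point sequence since closures are sequential. No gaps; your write-up simply makes explicit what the paper leaves implicit.
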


This is already a consequence of \cite[Corollary 2.1]{Park2}. However, we find interesting to formulate it explicitly, as it was not done in \cite{Park2}.

Let us comment this corollary a bit. The main result of \cite{LS} is their Theorem 1 which says, in particular, that any Lipschitz selfmap $f$ of a convex subset $C$ of a normed space $X$ has an approximate fixed point sequence if and only if $C$ is totally bounded. Here, the `if' part is the easy implication. Indeed,
$f$ can be extended to the closure of $C$ in the completion of $X$.
This closure is compact, hence the extended function has {a} fixed point by Schauder's theorem, so we get an approximate fixed point sequence for $f$ (see \cite[p. 635]{LS}). The above corollary extends the easy implication of \cite{LS} to continuous (not necessarily Lipschitz) functions.

Theorem~\ref{thm:M1} is the best possible result in a sense.
It is witnessed by the following two results. The first one says
that the assumption of total boundedness is essential, even for uniformly continuous maps. The second one shows that sequentially continuous maps need not have approximate fixed point sequences even if $C$ is compact.

\begin{theorem}\label{thm:M2}
Let $X$ be a locally convex space and $C\subset X$ a bounded convex set which is not totally bounded. Then there is a uniformly continuous map $f:C\to C$ with no approximate fixed point net.
\end{theorem}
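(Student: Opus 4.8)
The plan is to reduce to a statement about one seminorm and there run a seminorm version of the Lin--Sternfeld construction \cite{LS}. Since $C$ is not totally bounded, there are a continuous seminorm $p$ on $X$ and $\varepsilon>0$ such that $C$ has no finite $\varepsilon$-net with respect to $p$; in particular $(C,p)$, regarded as a pseudometric space, is not totally bounded, while $d_q:=\sup\{q(x-y):x,y\in C\}<\infty$ for every continuous seminorm $q$ because $C$ is bounded. Fix a sequence $(a_n)_{n\ge1}$ in $C$ with $p(a_n-a_m)\ge\varepsilon$ for $n\ne m$. It suffices to produce a uniformly continuous $f\colon C\to C$ with $\inf_{x\in C}p(x-f(x))=:\delta>0$: then $\{x-f(x):x\in C\}$ is contained in the closed set $\{z\in X:p(z)\ge\delta\}$, which does not contain $0$, so $0\notin\overline{\{x-f(x):x\in C\}}$ and $f$ has no approximate fixed point net.

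To build $f$ I would carry out the Lin--Sternfeld construction of a fixed-point-free Lipschitz selfmap of a bounded, non-totally-bounded convex subset of a normed space, but \emph{with the norm replaced by the seminorm $p$}. Concretely, pass to the quotient normed space $Y=X/p^{-1}(0)$ (or its completion), with quotient map $\pi$; then $\|\pi(x)-\pi(a_n)\|=p(x-a_n)$, and $D:=\pi(C)$ is a bounded convex subset of $Y$ that is not totally bounded, with $(\pi(a_n))$ an $\varepsilon$-separated sequence. Since $D$ is not totally bounded, \cite[Theorem~1]{LS} provides a Lipschitz $g\colon D\to D$ with no approximate fixed point sequence, i.e.\ $\inf_{y\in D}\|y-g(y)\|>0$. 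The point on which everything hinges --- and which must be checked, or the construction reworked --- is that $g$ can be presented as a convex combination (at each point, of finitely many) of the points $\pi(a_n)$ with coefficient functions that are Lipschitz functions of the distances $\|y-\pi(a_n)\|$. Granting this, since $\|\pi(x)-\pi(a_n)\|=p(x-a_n)$ the very same formulas define $c_n\colon C\to[0,1]$ depending only on the scalars $p(x-a_n)$, with $\sum_n c_n\equiv1$ and a common $p$-Lipschitz bound; putting $f(x):=\sum_n c_n(x)\,a_n$ gives a convex combination of points of $C$, so $f(x)\in C$, and $\pi\circ f=g\circ\pi$, whence $p(x-f(x))=\|\pi(x)-g(\pi(x))\|\ge\delta$ for all $x\in C$.

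Uniform continuity of $f$ is then a routine consequence of the boundedness of $C$: since $\sum_n(c_n(x)-c_n(y))=0$ one may write $f(x)-f(y)=\sum_n(c_n(x)-c_n(y))(a_n-a_1)$, and the common $p$-Lipschitz control on the $c_n$ yields $\sum_n|c_n(x)-c_n(y)|\le\Lambda\,p(x-y)$, hence $q(f(x)-f(y))\le d_q\Lambda\,p(x-y)$ for every continuous seminorm $q$; as $d_q<\infty$ this makes $f$ uniformly continuous (given $q$ and $\eta>0$, take the seminorm $p$ and $\eta/(d_q\Lambda)$).

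I expect the main obstacle to be the construction itself --- establishing that the Lin--Sternfeld map admits the ``seminorm-transferable'' presentation above, i.e.\ as a convex combination of a separated sequence with uniformly $p$-Lipschitz coefficients built from the mutual $p$-distances, or re-deriving it in that form. The delicacy is genuine: a plain ``shift along $(a_n)$'' will not do, because such maps typically do possess approximate fixed point \emph{sequences} --- for instance, the right shift $x\mapsto(1,x_1,x_2,\dots)$ is a fixed-point-free isometry of the unit ball of $c_0$, yet $\|x_k-f(x_k)\|_\infty\to0$ for $x_k:=k^{-1}(e_1+\dots+e_k)$. The coefficient functions must be arranged so that \emph{no} point of $C$ is nearly fixed, which is precisely the content of the Lin--Sternfeld argument; by comparison, the passage to a single seminorm and the uniform-continuity estimate are straightforward.
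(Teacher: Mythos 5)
Your overall strategy---reduce to a single continuous seminorm $p$, pass to the quotient normed space $Y=X/p^{-1}(0)$, and transport a Lin--Sternfeld map back to $C$---is exactly the paper's strategy, and your reduction of ``no approximate fixed point net'' to $\inf_C p(x-f(x))>0$ and your uniform-continuity estimate from boundedness of $C$ are both fine. But the step on which, as you yourself say, ``everything hinges'' is a genuine gap, not a routine verification. The quotient map $\pi$ is not injective on $C$, so a Lipschitz $g:\pi(C)\to\pi(C)$ obtained from \cite[Theorem 1]{LS} as a black box does not lift to a selfmap of $C$; your proposed sufficient condition---that $g$ be a convex combination of the $\pi(a_n)$ with coefficients that are Lipschitz functions of the distances $\|y-\pi(a_n)\|$---is not a property the Lin--Sternfeld construction provides (their retraction onto the copy of $\Delta$ is built from Hahn--Banach functionals, not from distance functions), and it is not established in your argument. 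A second, related problem: from non-total-boundedness you only extract a mutually $\varepsilon$-separated sequence $(a_n)$, and mutual separation is too weak both for Lipschitz barycentric coordinates on the triangle complex over the $a_n$ and for the Lipschitz retraction; one needs the stronger separation $\mathrm{dist}_{p}\bigl(a_{n+1},\mathrm{span}\{a_1,\dots,a_n\}\bigr)>\delta$, which requires an extra (easy but necessary) argument.

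The paper resolves precisely this lifting difficulty by working upstairs rather than downstairs: after arranging $0\in C$, it chooses $x_n\in C$ with $\mathrm{dist}_{\rho_0}(x_{n+1},\mathrm{span}\{x_1,\dots,x_n\})>\delta$, proves an $\ell_1$-type lower estimate for $\rho$-seminorms of combinations of at most four of the $x_n$, and concludes that $D=\bigcup_n\mathrm{co}\{0,x_n,x_{n+1}\}\subset C$ is bi-Lipschitz isomorphic to the Lin--Sternfeld set $\Delta$ and that the quotient map is an \emph{isometry} on $D$. The Lipschitz retraction $r_0$ of $Y$ onto the image of $D$ (from \cite[Proposition 1]{LS}) can then be pulled back through this isometry to a uniformly continuous, $\rho_0$-Lipschitz retraction $r:X\to D$, and $f=g_0\circ r|_C$ with $g_0$ the fixed-point-free Lipschitz selfmap of $D$ finishes the proof (with a short estimate showing $\rho_0(x-f(x))\ge\varepsilon$ also for $x$ far from $D$, since $r$ is not the identity off $D$). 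If you want to salvage your version, the fix is essentially the same: do not cite \cite[Theorem 1]{LS} as a black box, but open it up, note that the range of the map lies in a triangle complex over points that you can take of the form $\pi(a_n)$ with the distance-to-span separation, and use injectivity of $\pi$ on that complex to define the lift; the ``distance-only coefficients'' requirement can then be dropped.
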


This result will be proved in the last section. It is a kind of extension of the result of \cite{LS}. Let us recall that in \cite{LS} it is proved that any convex subset of a normed space which is not totally bounded admits a Lipschitz selfmap without approximate fixed point net. Since our result concerns general locally convex spaces, we replace Lipschitz maps by uniformly continuous ones. However, the resulting selfmap is also `Lispchitz' in a sense, see the formulation of Theorem~\ref{thm:M2-1} below.

We continue by the promised example of a {sequentially} continuous selfmap of a compact convex set without approximate fixed point sequence.

\begin{example}\label{ex:1} There is a Hausdorff locally convex space $X$ equipped with its weak topology, a nonempty compact convex subset $K\subset X$
and a sequentially continuous function $f:K\to K$ with no approximate fixed point sequence.
\end{example}

\begin{proof} Let $X=(\ell_\infty^*,w^*)$ and
$K=\{\mu\in X:\mu\ge 0\ \&\ \|\mu\|\le 1\}$.
Then $X$ is a locally convex space, the topology is its weak one
and $K$ is a nonempty convex compact subset of $X$. It remains
to construct the function $f$.

The space $\ell_\infty^*$ can be canonically identified with the space $M(\beta\N)$ of signed Radon measures on the compact space $\beta\N$ (\v{C}ech-Stone {compactification} of natural numbers).
Let $P:M(\beta\N)\to M(\beta\N)$ be defined by
$$P(\mu)(A)=\mu(A|_\N)=\sum_{n=1}^\infty \mu(\{n\})\delta_n,$$
{where $\delta_x$ denotes the Dirac measure supported by $x$.} Then $P$ is a bounded linear operator. We set
$K_0=P(K)$. Then $K_0\subset K$ and $K_0$ is a convex subset of $\ell_\infty^*$ which is not totally bounded in the norm. Hence, by
\cite[Theorem 1]{LS} there is a Lipschitz map $g:K_0\to K_0$ without
approximate fixed point sequence (with respect to the norm).

Set $f= g\circ P|_K$. We claim that $f$ is weak$^*$-to-weak$^*$ sequentially continuous and has no approximate fixed point sequence
in the weak* topology.

To show the first assertion, let $(\mu_n)$ be a sequence in $K$ weak$^*$ converging to some $\mu\in K$. Since $\ell_\infty$ is a Grothendieck space, $\mu_n\to\mu$ weakly in $\ell_\infty^*$. Since $P$ is a bounded linear operator, it is also weak-to-weak continuous, hence $P\mu_n\to P\mu$ weakly in $\ell_\infty^*$.
Since $P(\ell_\infty^*)$ is isometric to the space $\ell_1$, by the Schur property we have $P\mu_n\to P\mu$ in the norm, so
$g(P\mu_n)\to g(P\mu)$ in the norm. We conclude that $f(\mu_n)\to f(\mu)$ in the norm, hence also in the weak$^*$ topology.
This completes the proof that $f$ is sequentially continuous.

Next suppose that $(\mu_n)$ is an {approximate} fixed point sequence in $K$. Then $\mu_n-f(\mu_n)\to 0$ in the weak$^*$ topology. By the Grothendieck property of $\ell_\infty$ we get that $\mu_n-f(\mu_n)\to 0$ weakly in $\ell_\infty^*$. Since $P$ is a bounded linear operator, we get $P\mu_n-P f(\mu_n)\to 0$ weakly,
so $P\mu_n-Pf(\mu_n)\to 0$ in the norm by the Schur theorem.
Further,
$$P\mu_n-Pf(\mu_n)= P\mu_n-f(\mu_n)=P\mu_n- g(P\mu_n),$$
so $(P\mu_n)$ is an approximate fixed point sequence for $g$ with respect to the norm. It is a contradiction completing the proof.
\end{proof}

We complete the picture by the following results on affine maps.
We include the following easy observations on approximate fixed point sequences and fixed points of affine maps.

\begin{proposition}\label{prop:1} Let $X$ be a topological vector space, $C\subset X$ a nonempty bounded convex set and $f:C\to C$ an affine selfmap. Then the following assertions hold:
\begin{itemize}
	\item[(i)] The mapping $f$ has an approximate fixed point sequence.
	\item[(ii)] If $X$ is Hausdorff, $C$ is countably compact and $f$ is continuous, then $f$ has a fixed point.
	\item[(iii)] If $X$ is Hausdorff, $C$ is sequentially compact and $f$ is sequentially continuous, then $f$ has a fixed point.
\end{itemize}
\end{proposition}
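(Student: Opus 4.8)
The plan is to prove (i) by the classical device of averaging the iterates of $f$ (as in the Markov--Kakutani theorem) and then to deduce (ii) and (iii) from it together with a suitable compactness argument.

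For (i) I would fix an arbitrary point $x_0\in C$ and consider the averages
$$x_n=\frac1n\sum_{k=0}^{n-1}f^k(x_0),\qquad n\ge 1.$$
Since $f(C)\subseteq C$, every iterate $f^k(x_0)$ lies in $C$, and convexity of $C$ gives $x_n\in C$. As $f$ is affine it preserves finite convex combinations, so $f(x_n)=\frac1n\sum_{k=1}^{n}f^k(x_0)$, and the two sums telescope to
$$x_n-f(x_n)=\frac1n\bigl(x_0-f^n(x_0)\bigr).$$
Now $x_0-f^n(x_0)\in C-C$, which is bounded because $C$ is; and in a topological vector space, whenever $B$ is bounded and $U$ is a balanced neighbourhood of $0$ one has $B\subseteq tU$ for some $t>0$, hence $\tfrac1nB\subseteq U$ for all $n>t$. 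Therefore $x_n-f(x_n)\to 0$ and $(x_n)$ is the desired approximate fixed point sequence; note that continuity of $f$ plays no role here.

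For (ii) I would reuse the sequence $(x_n)$ from (i) and put $g(y)=y-f(y)$, a continuous map on $C$ with $g(x_n)\to 0$. Countable compactness of $C$ yields a cluster point $x\in C$ of $(x_n)$. The point of the argument is that $g(x)$ is then a cluster point of $(g(x_n))$: given any neighbourhood $W$ of $g(x)$, continuity provides a neighbourhood $U$ of $x$ with $g(U\cap C)\subseteq W$, and $x_n\in U$ for infinitely many $n$, so $g(x_n)\in W$ for infinitely many $n$. Since in the Hausdorff space $X$ a convergent sequence has no cluster point other than its limit, $g(x)=0$, i.e. $f(x)=x$.

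For (iii) I would again start from $(x_n)$ as in (i); sequential compactness of $C$ provides a subsequence $x_{n_k}\to x\in C$, sequential continuity gives $f(x_{n_k})\to f(x)$, hence $x_{n_k}-f(x_{n_k})\to x-f(x)$, while this subsequence of a null sequence also converges to $0$, and Hausdorffness forces $x=f(x)$. The only delicate point is the cluster-point step in (ii): since $X$ need not be metrizable one cannot simply extract a convergent subsequence, so one must argue directly with neighbourhoods as above; parts (i) and (iii) are completely routine.
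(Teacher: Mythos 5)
Your proof is correct and follows essentially the same route as the paper: part (i) uses exactly the same Ces\`aro averages of the iterates of $f$ with the telescoping identity $x_n-f(x_n)=\frac1n\bigl(x_0-f^n(x_0)\bigr)$ and boundedness of $C$, and parts (ii), (iii) deduce a fixed point by passing to a cluster point, respectively a convergent subsequence, of that approximate fixed point sequence. The only cosmetic difference is that in (ii) the paper extracts a convergent subnet and takes limits, while you argue directly with neighbourhoods via the map $y\mapsto y-f(y)$; both arguments are valid.
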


\begin{proof} (i) Fix any $y_1\in C$ and define inductively the sequence $(y_k)$ by setting $y_{k+1}=f(y_k)$. Set
$$x_k=\frac{y_1+\dots+y_k}{k}.$$
Then
$$x_k-f(x_k)=\frac{y_1-y_{k+1}}{k}\to0$$
as $C$ is bounded.

(ii) Let $(x_k)$ be an approximate fixed point sequence given by the assertion (i). Since $C$ is countably compact, there is some $x\in C$ which is a cluster point of $(x_k)$, hence there is some subnet $(x_\nu)$ of $(x_k)$ which converges to $x$. By continuity of $f$ we get $f(x_\nu)\to f(x)$. However, $(x_\nu-f(x_\nu))$ is a subnet of $(x_k-f(x_k))$, hence $x_\nu-f(x_\nu)\to 0$. So, $x=f(x)$.

(iii) The proof can be done in the same way as that of (ii), we only use sequential compactness to extract a subsequence $(x_{k_n})$ converging to some $x\in C$ and then we use sequential continuity to deduce that $f(x_{k_n})\to f(x)$.
\end{proof}

Let us stress that in the assertion (i) no continuity property of $f$ is assumed. This observation generalizes \cite[Theorem 4]{GJZ},
where the assertion (i) is proved for nets.

The assertion (ii) under the stronger assumption that $C$ is compact is well known. It is a special case of the Schauder-Tychonoff theorem,
but the proof is easy, it does not require the use of Brouwer's theorem.

The assertion (iii) says that in some very special cases sequentially continuous maps have fixed points.  However,
the assumption of sequential compactness cannot be replaced by compactness, as witnessed by the following example shows that sequentially continuous affine selfmaps of compact convex sets need not have a fixed point.

\begin{example}\label{ex:2} There is a Hausdorff locally convex space $X$ equipped with its weak topology, a nonempty compact convex subset $K\subset X$
and an affine sequentially continuous function $f:K\to K$ with no  fixed point.
\end{example}

\begin{proof} Let $X=(\ell_\infty^\star, w^\star)$. We can regard $X$ as signed Radon measures on $\beta\mathbb{N}$. Let $C$ be the subset of $X$ consisting of probability measures. Then $C$ is compact and convex. Now, pick a decomposition $\{A_n\colon n\in\mathbb{N}\}$ of $\mathbb{N}$ into infinite disjoint subsets. Next, we shall use this decomposition to define a sequence $(k_j)$ of natural numbers as follows:
\begin{itemize}
\item[(a)] $k_1\geq 2$ and $k_1\not\in A_1$.
\item[(b)] $k_{j+1}>k_j$, and $k_{j+1}\not\in A_1\cup A_2\cup\dots\cup A_{j+1}$ for each $j\in\N$.
\end{itemize}
Let us define now a linear map $f\colon X\to X$ by the formula
\[
f(\mu)=\mu(\beta\mathbb{N}\setminus \mathbb{N})\cdot\delta_1+\sum_{j=1}^\infty \mu(A_j)\cdot\delta_{k_j},
\]
where $\delta_x$ denotes the Dirac measure supported by $x$. Then $f$ is a linear mapping which is norm-to-norm continuous, and hence weak-to-weak continuous on $\ell_\infty^\star$. As $\ell_\infty$ is a Grothen\-dieck space, $f$ is weak$^\star$-to-weak$^\star$ sequentially continuous.
In other words, it is sequentially continuous when considered from $X$ to $X$. Further,  it is obvious that $f$ fixes $C$.

Finally, $f$ has no fixed point in $C$. Indeed, suppose that $\mu\in C$ is a fixed point, i.e., $f(\mu)=\mu$. Since $f(\mu)$ is supported by $\N$, we have
$$\mu(\{1\})=f(\mu)(\{1\})=\mu(\beta\N\setminus\N)=f(\mu)(\beta\N\setminus \N)=0.$$
Hence $\mu$ is supported by the set $\{k_j:j\in\N\}$. Since $\mu$ is a probability measure, we can find the minimal $j$ such that
$\mu(\{k_j\})\ne 0$. However,
$$\mu(\{k_j\})=f(\mu)(\{k_j\})=\mu(A_j)=0$$
as $k_l\notin A_j$ for $l\ge j$ by the condition (b).
It is a contradiction.
\end{proof}

Let us remark that it seems not to be clear whether the assumption that $f$ is affine is essential in the assertions (ii) and (iii) of Proposition~\ref{prop:1}. The proof given above works provided $f$ admits an approximate fixed point sequence. However, the best thing we are able to obtain is an approximate fixed point by Theorem~\ref{thm:M1}. Indeed, countably compact sets in topological vector spaces are necessarily totally bounded. We have not found
a reference for this easy observation, so we include a proof.

\begin{proposition}\label{prop:cc} Let $X$ be a topological vector space and $C\subset X$ a relatively countably compact subset. Then $C$ is totally bounded.
\end{proposition}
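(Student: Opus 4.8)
The claim is that a relatively countably compact subset $C$ of a topological vector space $X$ is totally bounded. The plan is to argue by contradiction: suppose $C$ is not totally bounded, so there is a neighborhood $U$ of $0$ in $X$ such that $C$ cannot be covered by finitely many translates $x+U$ with $x\in X$ (equivalently, with $x\in C$). Using the continuity of addition at $0$, first I would pick a balanced (or at least symmetric) open neighborhood $V$ of $0$ with $V+V\subseteq U$; then the failure of total boundedness with respect to $U$ lets me build, by a routine greedy induction, a sequence $(x_n)$ in $C$ with $x_n\notin x_m+V$ for all $m<n$. Indeed, having chosen $x_1,\dots,x_{n-1}$, the union $\bigcup_{m<n}(x_m+V)$ cannot cover $C$ (otherwise the translates $x_m+U\supseteq x_m+V$ would, contradicting the choice of $U$), so some $x_n\in C$ lies outside it.

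The key point is that such a sequence $(x_n)$ has no cluster point in $X$, which contradicts relative countable compactness of $C$. Suppose $y\in X$ were a cluster point of $(x_n)$. Then every neighborhood of $y$, in particular $y+V$, contains $x_n$ for infinitely many $n$; pick two indices $m<n$ with $x_m,x_n\in y+V$. Then $x_n-x_m\in (y+V)-(y+V)=V-V\subseteq V+V\subseteq U$ — wait, more carefully I should arrange $V$ symmetric so that $V-V=V+V\subseteq U$, or simply choose $V$ balanced; in any case $x_n\in x_m+U$. This does not yet contradict $x_n\notin x_m+V$, so I need to be slightly more careful: I should instead directly derive $x_n\in x_m+V$ (not merely $+U$). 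To do this cleanly, replace the role of $U$ above: since $C$ is not totally bounded there is a $0$-neighborhood $U$ with no finite $U$-net; take a symmetric $0$-neighborhood $V$ with $V+V\subseteq U$, and run the greedy construction producing $(x_n)$ with $x_n\notin x_m+U$ for $m<n$. Then if $x_m,x_n\in y+V$ with $m<n$, we get $x_n-x_m\in V-V\subseteq V+V\subseteq U$, i.e. $x_n\in x_m+U$, contradicting the construction. Hence $(x_n)$ has no cluster point in $X$, so $C$ is not relatively countably compact.

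The only mild subtlety — and the thing to state carefully rather than a real obstacle — is the interplay between ``cover by finitely many translates $x+U$ with $x$ arbitrary in $X$'' and ``with $x\in C$'': if $C\subseteq \bigcup_{i=1}^k (x_i+V)$ with $x_i\in X$ but each $x_i+V$ meeting $C$, one can replace each such $x_i$ by a point of $C\cap(x_i+V)$ at the cost of enlarging $V$ to $V+V$; this is a standard adjustment and is why one passes from $U$ to $V$ with $V+V\subseteq U$ at the start. Everything else is elementary point-set topology in a topological group, using only continuity of the group operations; no local convexity or Hausdorffness is needed. I expect the write-up to be short, with the ``main work'' being the bookkeeping in the previous paragraph to make the translate-by-$V$ versus translate-by-$U$ quantifiers line up.
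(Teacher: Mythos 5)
Your argument is correct and is essentially the paper's own proof: by contraposition one picks a $0$-neighborhood $U$ witnessing failure of total boundedness, greedily builds a sequence $(x_n)$ in $C$ with $x_n\notin x_m+U$ for $m<n$, and uses a symmetric (in the paper, balanced) $V$ with $V+V\subset U$ to see that any translate $y+V$ contains at most one term, so the sequence has no cluster point in $X$. The only difference is cosmetic: your worry about translates centered in $X$ versus in $C$ is not actually needed, since the greedy step only requires that the finitely many translates centered at the already chosen points of $C$ fail to cover $C$, which follows at once from the choice of $U$.
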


\begin{proof} The proof will be done by contraposition. Suppose that $C$ is not totally bounded. It means that there is $U$, a balanced neighborhood of zero, such that $C$ cannot be covered by finitely many translates of $U$. We can then construct by induction a sequence $(x_n)$ in $C$ such that for each $n\in\N$ we have
$$x_{n+1}\notin\{x_1,\dots,x_n\}+U.$$
Then the set $A=\{x_n:n\in\N\}$ is a closed discrete subset of $X$. Indeed, let $V$ be a balanced neighborhood of $0$ such that $V+V\subset U$. Then for any $x\in X$ the set $x+V$ contains at most one element of $A$. Indeed, suppose that $m<n$ and $\{x_m,x_n\}\subset x+V$. Then $x\in x_m+V$, thus
$$x_n\in x+V\subset x_m+V+V\subset x_m+U,$$
a contradiction.
It follows that $A$ is an infinite subset of $C$ without accumulation point in $X$. Therefore $C$ is not relatively countably compact.
\end{proof}

We finish this section by the following questions which are, up to our knowledge, open.

\begin{question} Let $X$ be a Hausdorff locally convex space, $C\subset X$ a convex set and $f:C\to C$ a mapping. Suppose that one of the following two conditions is satisfied:
\begin{itemize}
	\item $C$ is countably compact and $f$ is continuous.
	\item $C$ is sequentially compact and $f$ is sequentially continuous.
\end{itemize}
Does $f$ necessarily admit a fixed point?
\end{question}

It follows from Proposition~\ref{prop:cc} and Theorem~\ref{thm:M1} that $f$ has an approximate fixed point net
provided $C$ is countably compact and $f$ is sequentially continuous. However, in this case $f$ need not have an approximate fixed point sequence (even if $C$ is compact, see Example~\ref{ex:1}). And, even if $f$ admits an approximate fixed point sequence, it need not have a fixed point (see Example~\ref{ex:2}).
It follows that the above question is natural, as in the quoted examples the respective sets are not sequentially compact and the respective maps are not continuous.

Let us remark that we would get the positive answer to the above questions if we are able to construct an approximate
fixed point sequence (see the proof of Proposition~\ref{prop:1}). However, we have no idea how to do that.

Finally, let us remark that there are some special cases when the answers are positive. For example, if $X$ is angelic,
then countably compact sets are compact and sequentially continuous on them maps are continuous. Another possibility is to strengthen the first version of assumptions -- to assume moreover that countable subsets of $C$ have compact closures in $C$.

\section{Proof of Theorem \ref{thm:M1}}\label{sec:3}

In this section we will prove a more general Theorem~\ref{thm:M3}. Theorem~\ref{thm:M1} then immediately follows. Theorem~\ref{thm:M3} is in fact a common generalization of Theorem~\ref{thm:M1} and \cite[Theorem 2.1]{BKL}. To formulate the theorem we need the following definition due to Himmelberg \cite{Himmelberg}:

\begin{definition} A nonempty subset $C$ of a topological vector space $X$ is said to be almost convex if for any neighborhood $V$ of the origin $0$ in $X$ and for any finite set $\{x_1,\dots, x_n\}\subset C$, there exists a finite set $\{z_1,\dots,z_n\}\subset C$ so that
\[
\co\{z_1,\dots,z_n\}\subset C\quad  \text{and}\quad z_i - x_i\in V\quad \text{for all}\quad i=1,\dots, n.
\]
\end{definition}

\begin{theorem}\label{thm:M3} Let $C$ be an almost convex subset of a topological vector space $(X,\mathscr{T})$, let $\mathscr{R}$ be a weaker locally convex topology on $X$ and $f\colon C\to \overline{C}$ be a $\mathscr{T}$-to-$\mathscr{R}$ sequentially continuous map such that
$f(C)$ is $\mathscr{R}$-totally bounded. Then $f$ has an approximate fixed point net.
\end{theorem}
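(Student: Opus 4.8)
The plan is to prove the equivalent statement that $0$ belongs to the $\mathscr{R}$-closure of $E:=\{x-f(x):x\in C\}$; by the reformulation recalled in Section~\ref{sec:1} this is the same as saying that $f$ has an approximate fixed point net. Accordingly I fix an arbitrary $\mathscr{R}$-neighborhood $V$ of $0$ and aim to produce a single $x^{\ast}\in C$ with $x^{\ast}-f(x^{\ast})\in V$. Since $\mathscr{R}$ is locally convex I may shrink $V$ to a convex balanced $\mathscr{R}$-neighborhood $W\subseteq V$ of $0$ and let $p$ be the Minkowski gauge of $W$; then $p$ is an $\mathscr{R}$-continuous seminorm with $\{x:p(x)<1\}\subseteq V$, and every estimate below is made with $p$.

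First I would extract a good finite subset of $C$. As $f(C)$ is $\mathscr{R}$-totally bounded there are $y_{1},\dots,y_{n}\in f(C)$ with $f(C)\subseteq\bigcup_{i}\{y:p(y-y_{i})<\tfrac13\}$. Each $y_{i}$ lies in $f(C)\subseteq\overline{C}$ and the sets $\{y:p(y-y_{i})<\tfrac13\}$ are $\mathscr{R}$-open, hence $\mathscr{T}$-open, so I can choose $s_{i}\in C$ with $p(s_{i}-y_{i})<\tfrac13$; thus $f(C)\subseteq\bigcup_{i}\{y:p(y-s_{i})<\tfrac23\}$. Now I invoke almost convexity of $C$ for the finite set $\{s_{1},\dots,s_{n}\}$ and the $\mathscr{T}$-neighborhood $\{y:p(y)<\tfrac13\}$ of $0$: it yields $z_{1},\dots,z_{n}\in C$ with $\co\{z_{1},\dots,z_{n}\}\subseteq C$ and $p(z_{i}-s_{i})<\tfrac13$ for all $i$. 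Combining the three estimates through the triangle inequality gives
\[
f(C)\subseteq\bigcup_{i=1}^{n}\{y:p(y-z_{i})<1\},
\]
and $D:=\co\{z_{1},\dots,z_{n}\}$ is a convex subset of $C$ on which $f$ is therefore defined.

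The core is now a finite-dimensional fixed point. Let $\Delta=\{\lambda\in\R^{n}:\lambda_{i}\ge0,\ \sum_{i}\lambda_{i}=1\}$, let $\phi:\Delta\to D$ be the affine map $\phi(\lambda)=\sum_{i}\lambda_{i}z_{i}$, and define $\beta_{i}:X\to[0,1]$ by $\beta_{i}(y)=\max\{0,\,1-p(y-z_{i})\}$, each $\beta_{i}$ being $\mathscr{R}$-continuous. By the displayed inclusion, $\sum_{i}\beta_{i}(f(x))>0$ for every $x\in D$, so the formula
\[
\psi(\lambda)=\frac{1}{\sum_{j}\beta_{j}(f(\phi(\lambda)))}\bigl(\beta_{1}(f(\phi(\lambda))),\dots,\beta_{n}(f(\phi(\lambda)))\bigr)
\]
defines a map $\psi:\Delta\to\Delta$. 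The crucial observation --- and precisely the point where sequential continuity suffices --- is that $\psi$ is continuous: $\phi$ is continuous from the metrizable space $\Delta$ into $(X,\mathscr{T})$, and $f$ is $\mathscr{T}$-to-$\mathscr{R}$ sequentially continuous, so $f\circ\phi$ is a sequentially continuous map from the \emph{metrizable} domain $\Delta$ into $(X,\mathscr{R})$ and hence continuous; composing with the $\mathscr{R}$-continuous $\beta_{i}$ and dividing by the nowhere-vanishing continuous denominator shows $\psi$ is continuous. By Brouwer's fixed point theorem there is $\lambda^{\ast}$ with $\psi(\lambda^{\ast})=\lambda^{\ast}$; set $x^{\ast}=\phi(\lambda^{\ast})\in D\subseteq C$ and $t=\sum_{j}\beta_{j}(f(x^{\ast}))>0$. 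The fixed point equation reads $x^{\ast}=\tfrac1t\sum_{i}\beta_{i}(f(x^{\ast}))z_{i}$, so
\[
x^{\ast}-f(x^{\ast})=\frac1t\sum_{i=1}^{n}\beta_{i}(f(x^{\ast}))\bigl(z_{i}-f(x^{\ast})\bigr).
\]
Whenever $\beta_{i}(f(x^{\ast}))>0$ one has $p(z_{i}-f(x^{\ast}))<1$, so by subadditivity and homogeneity of the seminorm $p$ we get $p(x^{\ast}-f(x^{\ast}))<1$, i.e. $x^{\ast}-f(x^{\ast})\in\{p<1\}\subseteq V$. Since $V$ was arbitrary, $0\in\overline{E}^{\,\mathscr{R}}$, which is the claim; Theorem~\ref{thm:M1} is then the special case $\mathscr{R}=\mathscr{T}$, a convex set being trivially almost convex.

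I expect the only genuine difficulty to be the continuity of $\psi$: one must funnel the sequential continuity of $f$ through the metrizable simplex $\Delta$ rather than through $C$ itself (which need not be metrizable), and it is exactly almost convexity that guarantees that the approximating simplex $D$ sits inside $C$, so that the composition $f\circ\phi$ makes sense. Everything else is the classical Schauder-projection estimate.
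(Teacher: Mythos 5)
Your argument is correct, and it finishes differently from the paper. The first half is the same reduction the paper makes in the proof of Lemma~\ref{L-seminorm}: fix one $\mathscr{R}$-continuous seminorm (your gauge $p$), use $\mathscr{R}$-total boundedness of $f(C)$ together with $f(C)\subset\overline C$ to get centers that can be moved into $C$, and then use almost convexity to replace them by points $z_1,\dots,z_n$ whose convex hull stays in $C$; your $1/3+1/3+1/3$ bookkeeping is just the paper's $\varepsilon/2+\varepsilon/2$ split with one extra intermediate step, and your passage from the single-seminorm estimate to ``$0\in\overline{\{x-f(x)\}}^{\mathscr{R}}$'' is exactly how the paper deduces Theorem~\ref{thm:M3} from Lemma~\ref{L-seminorm}. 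Where you diverge is the finite-dimensional core: the paper invokes Lemma~\ref{lem:1}, a KKM/Fan-type intersection lemma relaxed to sequentially closed values, applied to $F(z_i)=\{x\in C:\rho(f(x)-x_i)\ge\varepsilon/2\}$, and extracts the approximate fixed point from the failure of the covering condition; you instead build the classical Schauder projection (the partition $\beta_i$ and the normalized map $\psi$ on the simplex) and apply Brouwer, handling sequential continuity by routing it through the metrizable simplex $\Delta$, where sequential continuity of $f\circ\phi$ upgrades to continuity. This is precisely the alternative the authors flag in the Remark at the end of Section~\ref{sec:3} (``follow the lines of the proof of \cite[Theorem 2.1]{BKL} and reduce the problem to Brouwer's fixed point theorem''), so both routes exploit the same finite-dimensionality in the same spirit: the paper's choice hides the metrizability argument inside the Fan lemma (where sequentially closed sets in $\R^n$ are closed), which makes the sequential-continuity hypothesis enter only through sequential closedness of the sets $F(z_i)$, while your choice is more self-contained and elementary (Brouwer plus a convexity estimate, no KKM machinery) at the cost of carrying the partition-of-unity construction explicitly. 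All the individual steps you use check out — the $\mathscr{T}$-openness of $p$-balls, the applicability of almost convexity to $\{p<1/3\}$, positivity of the denominator, and the strict convex-combination estimate $p(x^*-f(x^*))<1$ — so there is no gap.
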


This theorem is an obvious consequence of the following lemma.

\begin{lemma}\label{L-seminorm} Let $C$ be an almost convex subset of a topological vector space $(X,\mathscr{T})$, let $\rho$ be a continuous seminorm on $X$ and $f\colon C\to \overline{C}$ be a $\mathscr{T}$-to-$\rho$ sequentially continuous map such that
$f(C)$ is $\rho$-totally bounded. Then for each $\varepsilon>0$ there is $x\in C$ with $\rho(x-f(x))<\varepsilon$.
\end{lemma}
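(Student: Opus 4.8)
The plan is to reduce the infinite-dimensional problem to a finite-dimensional one, where sequential continuity coincides with continuity, and then invoke a known approximate fixed point result (e.g. the Brouwer-type argument or Park's lemma for almost convex sets). Fix $\varepsilon>0$. Since $f(C)$ is $\rho$-totally bounded, cover it by finitely many $\rho$-balls of radius $\varepsilon/3$ centered at points $f(u_1),\dots,f(u_m)$ with $u_i\in C$; these centers lie in $\overline{C}$, but using almost convexity of $C$ we can perturb them slightly (within $\rho$-error, say by choosing a $\mathscr{T}$-neighborhood $V$ of $0$ on which $\rho<\varepsilon/3$ and applying the defining property of almost convexity) to obtain points $z_1,\dots,z_m\in C$ with $\co\{z_1,\dots,z_m\}\subset C$ and $\rho(z_i-f(u_i))$ small. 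Let $E=\co\{z_1,\dots,z_m\}$, a compact convex subset of the finite-dimensional space $\spann\{z_1,\dots,z_m\}$.

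Next I would build a continuous map of $E$ into itself that approximates $f$ in the $\rho$-metric. Using a partition of unity $(\varphi_i)_{i=1}^m$ on the $\rho$-metric space $f(C)$ subordinated to the cover by the balls $B_\rho(f(u_i),\varepsilon/3)$, define $\Phi\colon C\to E$ by $\Phi(x)=\sum_{i=1}^m \varphi_i(f(x))\, z_i$. This map is $\mathscr{T}$-to-$\rho$ sequentially continuous (as a composition of $f$ with a $\rho$-continuous map into the finite-dimensional $E$), and by construction $\rho(\Phi(x)-f(x))<\varepsilon/3+\max_i\rho(z_i-f(u_i))$ for all $x$, which we arrange to be $<2\varepsilon/3$. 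Now restrict $\Phi$ to $E$: we get a self-map $\Phi|_E\colon E\to E$. On the finite-dimensional compact convex set $E$ the topology $\mathscr{T}$ restricted to $E$ is metrizable (it is Hausdorff on the quotient, or one passes to $\rho$ which is a norm modulo its kernel on this finite-dimensional subspace — here one must be slightly careful, but $\rho$ separates points of $E$ up to a linear subspace that can be quotiented out), so $\Phi|_E$, being sequentially continuous, is continuous. By Brouwer's fixed point theorem applied in the finite-dimensional setting, $\Phi|_E$ has a fixed point $x_0\in E\subset C$. Then $\rho(x_0-f(x_0))=\rho(\Phi(x_0)-f(x_0))<2\varepsilon/3<\varepsilon$, as desired.

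The main obstacle I anticipate is the passage from sequential continuity to continuity on $E$: this requires that the restriction of $\mathscr{T}$ (or of a topology fine enough to detect the needed convergence) to the finite-dimensional compact convex set $E$ be metrizable, so that sequential continuity upgrades to genuine continuity and Brouwer applies. Since $\rho$ need not be a norm on $\spann\{z_i\}$ (its kernel may be nontrivial), one either quotients by $\ker\rho\cap\spann\{z_i\}$ — but then one must check the $z_i$ remain affinely independent enough, or rather that $E$ maps onto a compact convex set on which the argument runs — or, more robustly, one observes that any finite-dimensional compact convex subset of a topological vector space carries a metrizable topology in which $f|_E$ composed with the finite partition of unity is continuous. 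A secondary, more routine point is verifying that the perturbation afforded by almost convexity really can be taken small in the $\rho$-seminorm simultaneously with ensuring $\co\{z_1,\dots,z_m\}\subset C$; this is exactly the content of the definition of almost convex sets, so it is immediate. Once these are handled, the estimate $\rho(x_0-f(x_0))<\varepsilon$ is just the triangle inequality, and letting $\varepsilon\to 0$ in Theorem~\ref{thm:M3} (over all continuous seminorms $\rho$ generating $\mathscr{R}$) yields $0\in\overline{\{x-f(x):x\in C\}}$, i.e. an approximate fixed point net.
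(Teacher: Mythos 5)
Your strategy---reduce to a finite-dimensional self-map via a Schauder-type projection built from a partition of unity and then invoke Brouwer---is viable, and it is precisely the alternative route the paper itself mentions in the remark following the proof (following \cite[Theorem 2.1]{BKL}). The paper's own argument is different: it chooses points $x_1,\dots,x_n\in C$ that form a $\rho$-$(\varepsilon/2)$-net for $f(C)$, perturbs them by almost convexity to $z_1,\dots,z_n$ with $\co\{z_1,\dots,z_n\}\subset C$, and then applies a Fan--KKM-type lemma (Lemma~\ref{lem:1}, with \emph{sequentially} closed values) to the multimap $F(z_i)=\{x\in C:\rho(f(x)-x_i)\ge\varepsilon/2\}$; since $\bigcap_iF(z_i)=\emptyset$, the KKM condition must fail, which directly produces $x\in\co\{z_{\kappa_1},\dots,z_{\kappa_m}\}$ with $\rho(f(x)-x)<\varepsilon$. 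Your route trades the KKM machinery for the more classical Schauder-projection-plus-Brouwer argument; the paper's route is shorter but relies on the sequentially-closed variant of Fan's lemma.

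Two points in your write-up need repair. First, almost convexity, as defined, applies only to finite subsets of $C$, while your centers $f(u_1),\dots,f(u_m)$ lie merely in $\overline{C}$; you must first replace each $f(u_i)$ by a point $x_i\in C$ with $\rho(x_i-f(u_i))$ small (possible because $\{y:\rho(y-f(u_i))<\delta\}$ is a $\mathscr{T}$-neighborhood of $f(u_i)\in\overline{C}$, as $\rho$ is $\mathscr{T}$-continuous), and only then apply almost convexity to $\{x_1,\dots,x_m\}$ with $V=\{y:\rho(y)<\delta\}$; this is exactly how the paper selects its points, and it only requires adjusting your $\varepsilon/3$'s. Second, the obstacle you flag about metrizability of $\mathscr{T}$ restricted to $E$ is a red herring, and quotienting by $\ker\rho\cap\spann\{z_1,\dots,z_m\}$ is not the right fix (nor is Hausdorffness of $X$ available, since it is not assumed). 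The clean resolution is to equip $E=\co\{z_1,\dots,z_m\}$ with the Euclidean topology of $\spann\{z_1,\dots,z_m\}$, or equivalently to parametrize $E$ by the standard simplex via $t\mapsto\sum_i t_iz_i$. Every linear map from $\mathbb{R}^m$ into a topological vector space is continuous, so Euclidean convergence in $E$ implies $\mathscr{T}$-convergence; hence the $\mathscr{T}$-to-$\rho$ sequential continuity of $f$ makes the coefficient map $x\mapsto(\varphi_1(f(x)),\dots,\varphi_m(f(x)))$ sequentially continuous, and therefore continuous, on the compact \emph{metrizable} set $E$ with its Euclidean topology. Brouwer's theorem for compact convex subsets of a finite-dimensional space (no interior needed) then gives the fixed point of $\Phi|_E$, and your triangle-inequality estimate finishes the proof. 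With these two corrections your argument is complete.
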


The rest of this section is devoted to the proof of Lemma~\ref{L-seminorm}. We will use the following lemma, which is a slight generalization of a result of K.~Fan (see \cite[proof of Lemma 1]{Fan61} or \cite[Lemma]{Park}).

\begin{lemma}\label{lem:1} Let $C$ be a subset of a topological {vector} space $(X,\mathscr{T})$, $D$ a nonempty finite subset of $C$ such that $\co D\subset C$, and $F\colon D\to 2^C$ a multivalued map with the following two properties:
\begin{itemize}
\item[(a)] $F(z)$ is sequentially closed for all $z\in D$.
\item[(b)] $\co N \subset \bigcup_{z\in N} F(z)$ for all $N\subset D$.
\end{itemize}
 Then $\bigcap_{z\in D} F(z)\neq \emptyset$.
\end{lemma}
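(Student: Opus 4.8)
The plan is to deduce Lemma~\ref{lem:1} from the classical KKM lemma of Knaster--Kuratowski--Mazurkiewicz applied inside the finite-dimensional simplex spanned by $D$. Write $D=\{z_1,\dots,z_m\}$ and let $\Delta=\co D$; since $\co D\subset C$, we may regard each $F(z_i)\cap\Delta$ as a subset of $\Delta$. The KKM lemma asserts that if $G_1,\dots,G_m$ are closed subsets of $\Delta$ with $\co\{z_i:i\in I\}\subset\bigcup_{i\in I}G_i$ for every $I\subset\{1,\dots,m\}$, then $\bigcap_i G_i\neq\emptyset$. So the strategy is to set $G_i=F(z_i)\cap\Delta$, verify the KKM covering hypothesis (which is exactly property (b) restricted to $\Delta$), verify that each $G_i$ is closed in $\Delta$, and then read off that $\bigcap_i G_i\neq\emptyset$, which gives a point in $\bigcap_{z\in D}F(z)$.

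The first two verifications are essentially immediate: the covering condition for the $G_i$ is precisely condition (b) intersected with $\Delta$, and $\bigcap_i G_i=\bigcap_i F(z_i)\cap\Delta\subset\bigcap_{z\in D}F(z)$, so a nonempty intersection of the $G_i$ finishes the proof. The real point — and the reason the lemma is phrased with \emph{sequential} closedness rather than closedness — is the closedness of $G_i$ in $\Delta$. Here I would use that $\Delta$, being the convex hull of finitely many points, is a metrizable (indeed finite-dimensional) compact subset of $(X,\mathscr{T})$: the subspace topology it inherits from $\mathscr{T}$ is the usual Euclidean topology on a simplex. On a metrizable space, sequential closedness coincides with closedness. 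Hence $G_i=F(z_i)\cap\Delta$ is closed in $\Delta$ by hypothesis (a), and the KKM lemma applies verbatim.

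A point deserving care is the claim that the $\mathscr{T}$-subspace topology on $\Delta=\co\{z_1,\dots,z_m\}$ is the Euclidean one. This follows because the affine map from the standard simplex in $\mathbb{R}^m$ onto $\Delta$, sending $(t_1,\dots,t_m)\mapsto\sum t_i z_i$, is continuous and is a continuous bijection from a compact space onto a Hausdorff space — except that $X$ need not be Hausdorff. To handle the non-Hausdorff case one can either pass to the Hausdorff quotient $X/\overline{\{0\}}$, or argue directly: the affine surjection is continuous and open onto its image when restricted to the finite-dimensional affine subspace spanned by the $z_i$, since on a finite-dimensional subspace of any topological vector space the topology is unique up to the closure of $0$. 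Either way, one concludes that a sequentially closed subset of $\Delta$ is closed in $\Delta$, which is all that is needed. I expect this identification of the topology on $\Delta$ to be the only genuinely delicate step; once it is in place, the rest is a direct invocation of KKM. (Alternatively, if one prefers to avoid KKM entirely, the same conclusion can be obtained by Fan's original argument or by a Brouwer-fixed-point / Sperner-lemma computation on $\Delta$, but KKM is the cleanest route.)
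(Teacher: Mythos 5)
Your proposal reaches the right conclusion, but it takes a genuinely different and more delicate route than the paper, and the delicate part is only sketched. The paper's proof (it simply observes that Fan's original argument works verbatim) never works inside $\Delta=\co D$: it pulls everything back to the standard simplex $\sigma\subset\R^m$ via the continuous affine map $\phi(t_1,\dots,t_m)=\sum_i t_i z_i$. The sets $\phi^{-1}(F(z_i))$ are sequentially closed in $\sigma$ because continuous preimages of sequentially closed sets are sequentially closed, sequentially closed subsets of $\R^m$ are closed, hypothesis (b) transfers to the faces of $\sigma$, and the classical KKM lemma on $\sigma$ gives a point $t$ with $\phi(t)\in\co D\subset C$ lying in every $F(z_i)$. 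This sidesteps every issue you flag: no injectivity of $\phi$, no Hausdorffness of $X$, and no identification of the trace topology on $\co D$ is needed. Your route, working with $G_i=F(z_i)\cap\Delta$ inside $\Delta$ itself, stands or falls with knowing what the topology of $\Delta$ is, which is exactly where your argument gets thin.

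Two specific soft spots. First, even in the Hausdorff case the justification as written is off: $\phi$ need not be a bijection (the $z_i$ may be affinely dependent or repeated), so $\Delta$ is a polytope, not a simplex; what you actually need, and what is true, is that $\Delta$ sits in a finite-dimensional Hausdorff subspace and hence carries a Euclidean, in particular metrizable, topology — and note that the ``KKM lemma'' you invoke for arbitrary, possibly dependent, points is the generalized form whose standard proof is precisely the pullback to $\sigma$ above. Second, and more seriously, the lemma is stated for a not necessarily Hausdorff $(X,\mathscr{T})$, and there your concluding claim that ``a sequentially closed subset of $\Delta$ is closed in $\Delta$, which is all that is needed'' does not suffice: KKM is a statement about Euclidean polytopes, and in the non-Hausdorff case $\Delta$ with its trace topology is not one. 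To finish along your lines you must pass to the Hausdorff quotient $q\colon X\to X/\overline{\{0\}}$, check that the sets $q(G_i)$ are closed in the metrizable polytope $q(\Delta)$ and satisfy the covering condition, apply KKM there, and then pull the resulting point back into $\bigcap_i F(z_i)$; this last step uses that sequentially closed sets are saturated under $\overline{\{0\}}$ (a constant sequence converges to every point of the same coset). These steps are routine but missing from your sketch — or one simply abandons $\Delta$ and pulls back to the standard simplex, which is Fan's argument and the paper's proof.
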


The only generalization consists in assuming that the values $F(z)$ are sequentially closed (not necessarily closed). But Fan's proof obviously works for sequentially closed values as well (as a continuous preimage of a sequentially closed set is sequentially closed and sequentially closed sets in $\R^n$ are closed). However, this easy modification allows us to proof Lemma~\ref{L-seminorm}:

\begin{proof}[Proof of Lemma~\ref{L-seminorm}.]
Let $\varepsilon>0$ be arbitrary. Since $f(C)$ is $\rho$-totally bounded,
and $f(C)\subset\overline C$, there is a finite set $\{x_1,\dots, x_n\}\subset C$ such that for any $x\in f(C)$ there is some $i\in\{1,\dots,n\}$ with $\rho(x-x_i)<\frac\varepsilon2$.
Since $C$ is almost convex, we can also find points $z_1,\dots, z_n$ in $C$ so that $\rho(z_i-x_i)<\frac{\varepsilon}{2}$ for each $i=1,2,\dots, n$, and $\co\{z_1,\dots, z_n\}\subset C$.
Now set $D=\{z_1,\dots,z_n\}$ and define a multimap $F\colon D\to 2^C$ by putting for each $i$,
\[
F(z_i)=\left\{ x\in C\colon \rho(f(x)-x_i)\ge \frac{\varepsilon}{2}\right\}.
\]
Since $\rho$ is continuous and $f$ is sequentially continuous, each $F(z_i)$ is sequentially closed in $C$. Moreover, we have
\[
\bigcap_{i=1}^n F(z_i)=\emptyset.
\]
This follows from the choice of $x_1,\dots,x_n$.  By Lemma \ref{lem:1} applied to $F$, $D$ and $C$, we conclude that there exist a subset $\{z_{\kappa_1},z_{\kappa_2},\dots, z_{\kappa_m}\}$ of $D$ and an $x\in \co \{z_{\kappa_1},z_{\kappa_2},\dots, z_{\kappa_m}\}$ such that $x\notin \bigcup_{j=1}^m F(z_{\kappa_j})$. Hence $\rho(f(x)-x_{\kappa_j})<\frac{\varepsilon}{2}$ for all $j=1,2,\dots, m$,
so by the triangle inequality
\begin{equation}\label{eqn:2}
\rho(f(x)-z_{\kappa_j})<\varepsilon\quad \text{for all } j=1,2,\dots, m.
\end{equation}
Since $x\in\co \{z_{\kappa_1},z_{\kappa_2},\dots, z_{\kappa_m}\}$,
we get by using again the triangle inequality
$$\rho(f(x)-x)<\varepsilon.$$
This completes the proof.
\end{proof}

\medskip
\noindent{\bf Remark.} The proof of Lemma~\ref{L-seminorm} is inspired by \cite{Park}. Another possibility would be to follow the lines of the proof
of \cite[Theorem 2.1]{BKL} and to reduce the problem to Brouwer's fixed point theorem. Lemma~\ref{lem:1} is a substitute for Brouwer's theorem -- it is a consequence of the famous Knaster-Kuratowski-Mazurkiewicz principle (see \cite{KKM}).

\section{Proof of Theorem~\ref{thm:M2}}\label{sec:4}

Theorem~\ref{thm:M2} will be proved using results and ideas of \cite{LS}. We will prove a slightly stronger result:

\begin{theorem}\label{thm:M2-1}
Let $(X,\T)$ be a locally convex space and $C\subset X$ a bounded convex set. Let $\rho_0$ be a continuous seminorm such that $C$ is not $\rho_0$-totally bounded. Then there is a mapping $f:C\to C$ with the following properties:
\begin{itemize}
	\item $f$ admits no approximate fixed point net.
	\item For any continuous seminorm $\rho$ the mapping $f$ is $\rho_0$-to-$\rho$ Lipschitz.
\end{itemize}
\end{theorem}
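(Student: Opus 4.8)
The plan is to mimic the construction of Lin and Sternfeld \cite{LS} but phrased in terms of the seminorm $\rho_0$ rather than a norm. Since $C$ is bounded and not $\rho_0$-totally bounded, there is $\delta>0$ and a sequence $(c_n)$ in $C$ with $\rho_0(c_n-c_m)\ge\delta$ for all $n\ne m$. Passing to the quotient normed space $X/\ker\rho_0$, the images $\bar c_n$ form a $\delta$-separated bounded sequence, so one can work inside a separable normed space. Following \cite{LS}, I would first produce, on the set $S=\{\bar c_n:n\in\N\}\subset X/\ker\rho_0$ (or on its closed convex hull), a Lipschitz map without approximate fixed points; the classical device is a retraction-type map that ``shifts'' the separated points, built from a biorthogonal-type system or from the explicit construction in \cite[Theorem 1]{LS}. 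The key point is that all of this takes place with respect to a single seminorm, so the resulting map $g$ will be $\rho_0$-Lipschitz.

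Next I would pull $g$ back to $C$. Choose a map $r\colon C\to\co S$ (or into the relevant subset) which is $\rho_0$-to-$\rho_0$ Lipschitz and which is ``close to the identity'' in the sense needed so that $f=g\circ r$ inherits the absence of approximate fixed point nets; in the normed case of \cite{LS} one takes a Lipschitz retraction of $C$ onto the convex hull of the separated sequence. Because $C$ is $\rho_0$-bounded and we only care about $\rho_0$, such a Lipschitz ``nearest-point-like'' map can be arranged (the target being a nice convex subset of a separable normed space). Then $f=g\circ r\colon C\to C$.

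To check that $f$ has no approximate fixed point net, I would argue by contradiction: if $0\in\overline{\{x-f(x):x\in C\}}$ in the topology $\T$, then in particular $\rho_0(x-f(x))$ can be made arbitrarily small, i.e.\ there is a net $(x_\nu)$ with $\rho_0(x_\nu-f(x_\nu))\to0$. Projecting to $X/\ker\rho_0$ and using that $f$ factors through $r$ whose image lies in the set where $g$ acts, this produces points at which $g$ is $\rho_0$-almost fixed, contradicting the choice of $g$ from \cite{LS}. For the Lipschitz clause: given any continuous seminorm $\rho$ on $X$, since $f(C)$ lies in a finite- (or in general separable, but the relevant quantitative estimate is about the shift map) structured set, and since $g$ was built with uniformly controlled increments, one shows $\rho(f(x)-f(y))\le L\,\rho_0(x-y)$ with $L$ independent of $\rho$ — this works because $f(x)-f(y)$ is always a ``small'' combination of a fixed bounded family of vectors in $C$, and boundedness of $C$ bounds $\rho$ on that family; this is exactly where it matters that $C$ is $\T$-bounded.

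The main obstacle I expect is the second bullet: getting a single constant $L$ that works simultaneously for every continuous seminorm $\rho$. This forces the construction of $g$ (and the retraction $r$) to have the feature that differences $f(x)-f(y)$ lie in $\rho_0(x-y)$ times the (symmetric, convex) bounded set $C-C$, rather than merely being small in $\rho_0$; the Lin–Sternfeld map has to be examined carefully to confirm it has this stronger ``$C$-Lipschitz'' property, or modified slightly (e.g.\ by rescaling by $\delta$) so that it does. Verifying the no-approximate-fixed-point-net property is comparatively routine once $g$ is in hand, since it reduces directly to the corresponding property of the Lin–Sternfeld map via the factorization through $\ker\rho_0$.
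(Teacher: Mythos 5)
Your overall strategy (reduce to the Lin--Sternfeld construction via the quotient $X/\rho_0^{-1}(0)$) is the same as the paper's in spirit, but as written it has two genuine gaps. First, your map does not land in $C$: you build $g$ on $\co S$ where $S=\{\bar c_n\}$ lives in the quotient space, and then set $f=g\circ r$ with $r\colon C\to\co S$; this composition takes values in $X/\rho_0^{-1}(0)$, not in $C$, and to come back you would need a Lipschitz section of the quotient map over $\co S$, which in general does not exist. The paper avoids this by building the model set inside $C$ itself: it chooses $x_n\in C$ with $\dist_{\rho_0}(x_{n+1},\spann\{x_1,\dots,x_n\})>\delta$ and sets $D=\bigcup_n\co\{0,x_n,x_{n+1}\}\subset C$. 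On $D$ the seminorm $\rho_0$ is a genuine metric, so the quotient map restricted to $D$ is an isometry and can be inverted there --- that is the only place where a ``lift'' is used. The fixed-point-free Lipschitz map is taken on $\Delta$ (\cite[Section 5]{LS}) and transported to $D$ by a bi-Lipschitz isomorphism, and the retraction of $X$ onto $D$ comes from \cite[Proposition 1]{LS} applied in the quotient, composed with that isometry; your suggestion of a ``nearest-point-like'' Lipschitz map is not justified, since metric projections onto convex sets in normed spaces are in general neither single-valued nor Lipschitz.

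Second, a merely $\delta$-separated sequence $(c_n)$ is too weak. The span-separation in the paper's construction is what yields the two-sided $\ell_1$-type estimate
$m\sum_{i=1}^4|\alpha_i|\le\rho\bigl(\sum_{i=1}^4\alpha_i x_{k_i}\bigr)\le M\sum_{i=1}^4|\alpha_i|$
for every continuous seminorm $\rho\ge\rho_0$ (boundedness of $C$ gives only the upper bound). This estimate does double duty: it makes $(D,\rho_0)$ bi-Lipschitz isomorphic to $\Delta$, and it shows that the identity on $D$ is $\rho_0$-to-$\rho$ Lipschitz for every continuous seminorm $\rho$ (via $\rho+\rho_0$); since $f(C)\subset D$, the second bullet of the theorem follows immediately, with a constant that may depend on $\rho$. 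Note that the statement does not require one constant valid for all $\rho$ simultaneously, so the difficulty you single out is not actually demanded, and the mechanism you propose (forcing $f(x)-f(y)$ into $\rho_0(x-y)$ times $C-C$) is neither established by your argument nor needed. The verification that $f$ has no approximate fixed point net is, as you say, the routine part, and the paper does it by the quantitative estimate $\rho_0(x-f(x))\ge\eta/(L+2)$ imitating \cite{LS}.
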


The key role in the proof of this theorem is played by the space $\Delta$ defined and studied in \cite{LS}. Let us recall its definition and some properties.
Let $\{e_n\}$ be the canonical basis of the space $\ell_1$. Then
$$\Delta=\bigcup_{n=1}^\infty \co\{0,e_n,e_{n+1}\}.$$
The space $\Delta$ is considered with the metric inherited from $\ell_1$.
One of its important properties is contained in the following lemma.

\begin{lemma}\label{lm:Delta}  {\rm(\cite[Section 5]{LS})} There exists a Lipschitz mapping $g:\Delta\to\Delta$ such that $\inf_{x\in\Delta}\|x-g(x)\|>0$, hence $g$ has no approximate fixed point net.
\end{lemma}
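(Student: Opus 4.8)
The plan is to construct $g$ explicitly, exploiting the fan-like structure of $\Delta$: it is built from the triangles $T_n=\co\{0,e_n,e_{n+1}\}$, glued along the edges $[0,e_n]$, so $\Delta$ is a one-dimensional-looking "infinite fan" with vertices at $0$ and at the basis points $e_n$. The key geometric observation is that each point $x\in\Delta$ lies in some $T_n$ and can be written uniquely (on the interior of each triangle) in barycentric coordinates $x=\alpha\cdot 0+\beta e_n+\gamma e_{n+1}$ with $\alpha+\beta+\gamma=1$; equivalently, writing $x=\sum_k x(k)e_k$, the support of $x$ is contained in $\{n,n+1\}$ for the appropriate $n$, and $\alpha=1-x(n)-x(n+1)$. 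First I would define $g$ so that it pushes every triangle $T_n$ "outward" toward the next triangle $T_{n+1}$ by a fixed amount, say sending the vertex $e_n\mapsto e_{n+1}$, the vertex $e_{n+1}\mapsto e_{n+2}$, and $0\mapsto 0$, and extending affinely on each $T_n$. One checks that these affine pieces agree on the shared edges $[0,e_n]$ (both the piece on $T_{n-1}$ and the piece on $T_n$ send $te_n\mapsto te_{n+1}$), so $g$ is a well-defined map $\Delta\to\Delta$.

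Next I would verify the two required properties. For the Lipschitz bound: on each $T_n$, $g$ is affine with a controlled "displacement" of the vertices, and since the $e_n$ are a monotone (shift-like) reindexing, the restriction $g|_{T_n}$ is $1$-Lipschitz (or Lipschitz with a universal constant) in the $\ell_1$-metric; because two points of $\Delta$ lying in different triangles can be joined by a path through the common vertex $0$ and the relevant edges, the global Lipschitz estimate follows by concatenating the estimates on the finitely many triangles the segment meets—or, more cleanly, one estimates $\|g(x)-g(y)\|$ directly in coordinates using that $g$ acts as a shift on the "support block" and linearly on the block-coordinates. For the no-approximate-fixed-point property: if $x\in T_n$ with $x=\alpha\cdot 0+\beta e_n+\gamma e_{n+1}$, then $g(x)=\alpha\cdot 0+\beta e_{n+1}+\gamma e_{n+2}$, hence
\[
\|x-g(x)\|=\|\beta e_n+\gamma e_{n+1}-\beta e_{n+1}-\gamma e_{n+2}\|=\beta+|\gamma-\beta|+\gamma.
\]
This quantity is bounded below by a positive absolute constant unless $\beta$ and $\gamma$ are both small, i.e. unless $x$ is close to $0$; so the only possible obstruction is near the apex. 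To handle the apex I would modify the construction near $0$—for instance, first apply a fixed-point-free self-map of a neighborhood of $0$ in $\Delta$ (the fan near $0$ is itself a copy of a smaller fan, so a rotation-like shift of the "petals" $T_n\mapsto T_{n+1}$ does the job), or rescale so that the outward push is comparable to the distance to $0$. Concretely, a cleaner single formula is to combine the outward shift with a "rotation of petals" $T_n\to T_{n+1}$, so that a point at distance $r$ from $0$ is moved by an amount comparable to $r$ as well as by the fixed outward amount when $r$ is large; then $\inf_{x\in\Delta}\|x-g(x)\|>0$ holds globally.

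Finally, the conclusion "$g$ has no approximate fixed point net" is immediate: if $(x_\nu)$ were a net with $x_\nu-g(x_\nu)\to 0$, then $\|x_\nu-g(x_\nu)\|\to 0$, contradicting $\inf_{x\in\Delta}\|x-g(x)\|>0$. The main obstacle I anticipate is the behaviour at the apex $0$: the naive "shift the triangles outward" map fixes $0$ and has tiny displacement near it, so the positivity of the infimum genuinely requires a non-affine or petal-rotating modification near $0$, and one must check that this modification remains Lipschitz and still maps $\Delta$ into itself. Since this is exactly the construction carried out in \cite[Section 5]{LS}, I would ultimately cite their map $g$ and only reproduce as much of the verification as needed; the lemma is stated precisely so that we may use it as a black box in the proof of Theorem~\ref{thm:M2-1}.
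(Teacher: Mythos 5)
The paper does not actually prove this lemma: it is quoted verbatim from \cite[Section 5]{LS}, so your closing decision to treat the Lin--Sternfeld map as a black box is exactly what the authors do. The trouble is with the explicit construction you sketch, and it occurs precisely at the point you yourself flag as the main obstacle. Neither of your proposed repairs at the apex works. A ``rotation of the petals'' $T_n\mapsto T_{n+1}$ is nothing but the affine shift $e_n\mapsto e_{n+1}$ you started with: it fixes $0$, and for $x=\beta e_n+\gamma e_{n+1}$ it moves $x$ by $\beta+|\gamma-\beta|+\gamma\le 2\|x\|$, so the displacement still tends to $0$ at the apex. Likewise, rescaling so that ``the outward push is comparable to the distance to $0$'' gives by definition $\|x-g(x)\|\to 0$ as $x\to 0$, hence $\inf_{x\in\Delta}\|x-g(x)\|=0$. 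The point is that the lemma does not ask merely for a fixed-point-free map but for a uniform lower bound on the displacement; this forces an entire neighbourhood of the apex to be mapped a definite distance away from it, which no map fixing $0$, or moving each point by an amount proportional to its norm, can accomplish.

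The missing idea is to send the apex itself far away and interpolate affinely in barycentric coordinates: on $T_n$ put
\[
g\bigl(\alpha\cdot 0+\beta e_n+\gamma e_{n+1}\bigr)=\alpha e_1+\beta e_{n+1}+\gamma e_{n+2},\qquad \alpha,\beta,\gamma\ge 0,\ \alpha+\beta+\gamma=1.
\]
The pieces agree on the shared edges $[0,e_{n+1}]$ and all of them send $0$ to $e_1$, so $g\colon\Delta\to\Delta$ is well defined. It is Lipschitz with constant $2$: within one triangle $\beta,\gamma$ are coordinate functionals and $\alpha=1-\beta-\gamma$, giving $\|g(x)-g(y)\|\le 2\|x-y\|$, while for $x\in T_m$, $y\in T_n$ with $n\ge m+1$ the relevant supports are disjoint except for the first coordinate, so $\|g(x)-g(y)\|\le|\alpha_x-\alpha_y|+\|x-y\|\le 2\|x-y\|$; note that your alternative argument ``along the segment joining $x$ and $y$'' is not available, since $\Delta$ is not convex and such segments leave $\Delta$, but the coordinatewise estimate makes it unnecessary. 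Finally, for $x\in T_n$ with $n\ge 2$ one gets $\|x-g(x)\|=\alpha+\beta+|\gamma-\beta|+\gamma\ge\alpha+\beta+\gamma=1$, and for $n=1$, $\|x-g(x)\|=|\beta-\alpha|+|\gamma-\beta|+\gamma\ge\max\{\alpha,\beta,\gamma\}\ge\tfrac13$, so $\inf_{x\in\Delta}\|x-g(x)\|\ge\tfrac13>0$; the passage to ``no approximate fixed point net'' is then, as you say, immediate. With this one change your outline becomes a complete proof; as written, however, the construction you describe fails exactly where the difficulty lies, and the only safe part of your treatment of the apex is the fallback to citing \cite{LS}.
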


Let us start the proof of Theorem~\ref{thm:M2-1}. Let $(X,\T)$, $C$ and $\rho_0$ satisfy the assumptions. Without loss of generality we can suppose that $0\in C$.
It follows that there exists $\delta>0$ such that for any finite-dimensional subspace $F\subset X$ there is some $x\in C$ with $\dist_{\rho_0}(x,F)>\delta$. So, we can construct by induction a sequence $(x_n)$ in $C$ such that

\begin{itemize}
	\item[(i)] $\rho_0(x_1)>\delta$,
	\item[(ii)] $\dist_{\rho_0}(x_{n+1},\spann\{x_1,\dots,x_n\})>\delta$ for any $n\in\N$.
\end{itemize}

Further, we will prove the following lemma.

\begin{lemma} Let $\rho$ be a continuous seminorm on $X$ such that $\rho\ge\rho_0$. Then there are constants $m>0$ and $M>0$ such that
\begin{equation}
\label{eq:e1} m\sum_{i=1}^4 |\alpha_i|\le \rho\left(\sum_{i=1}^4\alpha_i x_{k_i}\right)\le M\sum_{i=1}^4 |\alpha_i|
\end{equation}
whenever $k_1<k_2<k_3<k_4$ are natural numbers and $\alpha_i$ are scalars for $i\in\{1,\dots,4\}$.\end{lemma}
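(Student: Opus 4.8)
The plan is to establish the two halves of \eqref{eq:e1} separately: the right-hand (upper) inequality is immediate from the boundedness of $C$, while the left-hand (lower) one is obtained by first reducing to the seminorm $\rho_0$ and then exploiting condition (ii) together with an a~priori upper bound on $\rho_0(x_n)$. For the upper bound, since $C$ is bounded and $\rho$ is continuous, $M:=\sup_{x\in C}\rho(x)<\infty$; because each $x_{k_i}$ lies in $C$, the triangle inequality gives $\rho\big(\sum_{i=1}^4\alpha_i x_{k_i}\big)\le\sum_{i=1}^4|\alpha_i|\,\rho(x_{k_i})\le M\sum_{i=1}^4|\alpha_i|$, which is the right-hand estimate.

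For the lower bound I would note first that, since $\rho\ge\rho_0$, it suffices to produce $m>0$ with $\rho_0\big(\sum_{i=1}^4\alpha_i x_{k_i}\big)\ge m\sum_{i=1}^4|\alpha_i|$; in fact the $m$ found below will depend only on $\delta$ and on $M_0:=\sup_{x\in C}\rho_0(x)<\infty$. Observe that (i) and (ii) force $\rho_0(x_n)>\delta$ for every $n$ (for $n\ge2$, apply (ii) with index $n-1$ and the zero vector of $\spann\{x_1,\dots,x_{n-1}\}$). Now fix $k_1<k_2<k_3<k_4$ and scalars $\alpha_1,\dots,\alpha_4$, set $y:=\sum_{i=1}^4\alpha_i x_{k_i}$ and, for $r=1,2,3,4$, the partial sums $y_r:=\sum_{i=1}^r\alpha_i x_{k_i}$, so that $y_4=y$. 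The argument rests on two estimates. \emph{First}, $\rho_0(y_r)\ge\delta\,|\alpha_r|$ for every $r$: for $r=1$ this is just $\rho_0(\alpha_1 x_{k_1})=|\alpha_1|\,\rho_0(x_{k_1})\ge\delta|\alpha_1|$, while for $r\ge2$ we have $x_{k_1},\dots,x_{k_{r-1}}\in\spann\{x_1,\dots,x_{k_r-1}\}$ (since $k_i\le k_r-1$ for $i<r$), so factoring out $\alpha_r$ when $\alpha_r\ne0$ and invoking (ii) yields $\rho_0(y_r)\ge|\alpha_r|\,\dist_{\rho_0}\big(x_{k_r},\spann\{x_1,\dots,x_{k_r-1}\}\big)\ge\delta|\alpha_r|$. \emph{Second}, $\rho_0(y_{r-1})\le(1+M_0/\delta)\,\rho_0(y_r)$ for $r=2,3,4$: from $y_{r-1}=y_r-\alpha_r x_{k_r}$ one gets $\rho_0(y_{r-1})\le\rho_0(y_r)+|\alpha_r|\,M_0$, and the first estimate bounds $|\alpha_r|$ above by $\rho_0(y_r)/\delta$.

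Iterating the second estimate downwards from $r=4$ gives $\rho_0(y_r)\le c^{4-r}\rho_0(y)$ with $c:=1+M_0/\delta$, and then the first estimate gives $|\alpha_r|\le c^{4-r}\rho_0(y)/\delta$ for $r=1,\dots,4$; summing over $r$,
\[
\sum_{i=1}^4|\alpha_i|\le\frac{\rho_0(y)}{\delta}\,(1+c+c^2+c^3),
\]
so the left-hand inequality of \eqref{eq:e1} holds with $m:=\delta/(1+c+c^2+c^3)$, using $\rho(y)\ge\rho_0(y)$. I do not expect a genuine obstacle here: the content of the lemma is the familiar fact that a uniform ``gap'' lower bound (condition (ii)) together with a uniform norm upper bound (boundedness of $C$) turns $(x_n)$ into a basic sequence whose constants depend only on $\delta$ and $M_0$. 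The only points that require attention are that one must pass to $\rho_0$ before attempting any lower estimate (a general continuous seminorm $\rho\ge\rho_0$ carries no lower information), and that the iteration be organized so that the resulting $m$ is independent of the quadruple $k_1<k_2<k_3<k_4$ and of $\rho$.
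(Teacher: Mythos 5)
Your proof is correct. Both halves check out: the upper bound is the trivial consequence of the boundedness of $C$, and in the lower bound your two estimates are sound --- the inequality $\rho_0(y_r)\ge\delta|\alpha_r|$ follows from condition (ii) exactly as you say (since $k_i\le k_r-1$ for $i<r$, the span in (ii) with $n=k_r-1$ contains all earlier terms), and the iteration $\rho_0(y_{r-1})\le(1+M_0/\delta)\rho_0(y_r)$ closes correctly, giving $m=\delta/(1+c+c^2+c^3)$ with $c=1+M_0/\delta$, independent of the quadruple and even of $\rho$.

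The route differs from the paper's in how the lower bound is organized. You run the classical basic-sequence (Gr\"unblum-type) argument: bound every coefficient $|\alpha_r|$ by a power of $c$ times $\rho_0(y)$ via iterated partial-sum estimates, then sum. The paper instead fixes ad hoc weights $c_i=\frac{1}{2^{2i+1}}(\delta/M)^{i-1}$, takes $i_0$ to be the greatest index with $|\alpha_{i_0}|\ge c_{i_0}\sum_i|\alpha_i|$, splits the sum at $i_0$, and applies the triangle inequality once: $\rho(\sum_i\alpha_i x_{k_i})\ge\rho(\sum_{i\le i_0}\alpha_i x_{k_i})-\rho(\sum_{i>i_0}\alpha_i x_{k_i})\ge\delta|\alpha_{i_0}|-M\sum_{i>i_0}|\alpha_i|$, the weights being tuned so that the maximality of $i_0$ makes the tail harmless; this yields $m=\frac{1}{32}\delta^4/M^3$. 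Both arguments rest on the same two ingredients (condition (ii) giving $\rho_0$-lower bounds on partial sums in terms of the top coefficient, and boundedness of $C$ giving upper bounds), so the difference is one of bookkeeping: the paper's one-shot split avoids iteration but needs the cleverly chosen $c_i$ and produces a $\rho$-dependent constant, whereas your reduction to $\rho_0$ at the outset gives a constant depending only on $\delta$ and $\sup_{x\in C}\rho_0(x)$ and makes transparent that the lemma is just the statement that $(x_n)$ behaves like a basic sequence with uniformly controlled constants.
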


\begin{proof}
Since $C$ is bounded, there is $M>0$ such that $\rho(x_n)\le M$ for each $n\in\N$. Without loss of generality we can suppose that $M>\delta$. Let $k_1<k_2<k_3<k_4$ be natural numbers and $\alpha_i$ be scalars for $i\in\{1,\dots,4\}$. Then clearly
$$\rho\left(\sum_{i=1}^4\alpha_i x_{k_i}\right)\le M\sum_{i=1}^4 |\alpha_i|,$$
which proves the second inequality in \eqref{eq:e1}.
To show the first one we set $$c_i=\frac1{2^{2i+1}}\cdot\left(\frac\delta M\right)^{i-1}\mbox{ for }i=1,\dots,4.$$  Then $\sum_{i=1}^4 c_i<1$ and hence there is some $i\in\{1,\dots,4\}$ such that
$|\alpha_i|\ge c_i\sum_{i=1}^4 |\alpha_i|$. Let $i_0$ be the greatest such $i$. Then
$$\begin{aligned}\rho\left(\sum_{i=1}^4\alpha_i x_{k_i}\right)&\ge
\rho\left(\sum_{i=1}^{i_0}\alpha_i x_{k_i}\right)-\rho\left(\sum_{i=i_0+1}^4\alpha_i x_{k_i}\right)
\ge \delta|\alpha_{i_0}|-M\sum_{i=i_0+1}^4|\alpha_i|
\\&\ge\left(\delta c_{i_0}-M\sum_{i=i_0+1}^4 c_i\right)\sum_{i=1}^4|\alpha_i|
=\delta\left(c_{i_0}-\frac M\delta\sum_{i=i_0+1}^4 c_i\right)\sum_{i=1}^4|\alpha_i|
\\&\ge \frac12 c_{i_0}\delta\sum_{i=1}^4|\alpha_i|
\ge\frac1{32}\cdot\frac{\delta^4}{M^3}\sum_{i=1}^4|\alpha_i|.\end{aligned}$$
The first inequality follows from the triangle inequality, the second one follows from the condition (ii) above using the fact that $\rho\ge\rho_0$ and from the choice of $M$. The third one follows from the choice of $i_0$, the next equality is obvious.
Last two inequalities follow from the choice of  the constants $c_1,\dots,c_4$. So, we can set
$$m=\frac1{32}\cdot\frac{\delta^4}{M^3}$$
and the proof is completed.
\end{proof}

We continue proving the theorem. Set
$$D=\bigcup_{n\in\N}\co\{0,x_n,x_{n+1}\}.$$
Then $D\subset C$. {Moreover}, by the previous lemma we get immediately:

\begin{corollary} \
\begin{itemize}
	\item $(D,\rho_0)$ is bi-Lipschitz isomorphic to $\Delta$.
	\item For any continuous seminorm $\rho$ on $X$ satisfying $\rho\ge\rho_0$ the identity on $D$ is $\rho_0$-to-$\rho$ bi-Lipschitz.
\end{itemize}
\end{corollary}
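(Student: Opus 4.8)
The plan is to make precise the evident correspondence $e_n\leftrightarrow x_n$ between the generators of $\Delta$ and those of $D$, and then to deduce both bullets directly from the preceding lemma, applied once with $\rho=\rho_0$ and once with a general continuous seminorm $\rho\ge\rho_0$. In short, the corollary is nothing but a geometric reformulation of the four-point estimate.

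First I would define $\varphi\colon\Delta\to D$ by declaring that if $u\in\co\{0,e_n,e_{n+1}\}$, say $u=\alpha e_n+\beta e_{n+1}$ with $\alpha,\beta\ge0$ and $\alpha+\beta\le1$, then $\varphi(u)=\alpha x_n+\beta x_{n+1}$. This is well defined because $\co\{0,e_n,e_{n+1}\}\cap\co\{0,e_m,e_{m+1}\}$ equals $\co\{0,e_{n+1}\}$ when $m=n+1$ and equals $\{0\}$ when $|m-n|\ge2$, and on each such overlap the two candidate values of $\varphi$ coincide; clearly $\varphi$ maps onto $D$. Next I would observe that for $u,v\in\Delta$ the element $\varphi(u)-\varphi(v)$ is a linear combination of at most four of the $x_n$ with strictly increasing indices: writing $u=\alpha e_n+\beta e_{n+1}$ and $v=\gamma e_p+\delta e_{p+1}$, the difference $\varphi(u)-\varphi(v)=\alpha x_n+\beta x_{n+1}-\gamma x_p-\delta x_{p+1}$ involves only indices from $\{n,n+1,p,p+1\}$, and after collecting repeated indices and, if necessary, padding with further indices carrying coefficient $0$, one can write $\varphi(u)-\varphi(v)=\sum_{i=1}^4 a_i x_{k_i}$ with $k_1<k_2<k_3<k_4$. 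The same formal combination with $x$ replaced by $e$ gives $u-v=\sum_{i=1}^4 a_i e_{k_i}$, hence $\|u-v\|_{\ell_1}=\sum_{i=1}^4|a_i|$. Applying the preceding lemma with $\rho=\rho_0$ therefore yields
$$ m\,\|u-v\|_{\ell_1}\ \le\ \rho_0\bigl(\varphi(u)-\varphi(v)\bigr)\ \le\ M\,\|u-v\|_{\ell_1}\qquad(u,v\in\Delta). $$
The left-hand inequality shows that $\varphi$ is injective, hence a bijection of $\Delta$ onto $D$, and the whole chain of inequalities is exactly the statement that $\varphi$ is a bi-Lipschitz isomorphism of $\Delta$ onto $(D,\rho_0)$; this is the first bullet.

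For the second bullet I would fix a continuous seminorm $\rho$ with $\rho\ge\rho_0$ and apply the lemma once more, now to $\rho$, obtaining constants $m',M'>0$ with $m'\sum_{i=1}^4|a_i|\le\rho(\sum_{i=1}^4 a_i x_{k_i})\le M'\sum_{i=1}^4|a_i|$ for all strictly increasing $k_1<\dots<k_4$ and all scalars. By the reduction of the previous paragraph this gives $m'\,t\le\rho(a-b)\le M'\,t$ for $a,b\in D$, where $t=\|\varphi^{-1}(a)-\varphi^{-1}(b)\|_{\ell_1}$, while the first part gives $m\,t\le\rho_0(a-b)\le M\,t$. Eliminating $t$ yields $\tfrac{m'}{M}\,\rho_0(a-b)\le\rho(a-b)\le\tfrac{M'}{m}\,\rho_0(a-b)$ for all $a,b\in D$, which is precisely the assertion that the identity on $D$ is $\rho_0$-to-$\rho$ bi-Lipschitz.

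The only slightly delicate point is the bookkeeping indicated in the second paragraph: checking that a difference of two points of $D$ genuinely reduces to a combination of at most four generators with strictly increasing indices, so that the four-point lemma is applicable, and that this combination is carried verbatim into $\ell_1$ by $\varphi^{-1}$. Once this is in place everything else is a direct quotation of the lemma, so I expect no real obstacle.
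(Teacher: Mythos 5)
Your proof is correct and follows exactly the route the paper intends: the paper derives the corollary "immediately" from the four-point estimate, and your argument just writes out the natural correspondence $e_n\leftrightarrow x_n$ and the reduction of differences of points of $D$ to combinations of at most four generators, which is the bookkeeping the authors leave implicit. No issues.
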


We have even the following:

\begin{lemma}\label{lm-lip} The identity on $D$ is $\rho_0$-to-$\rho$ Lipschitz for any continuous seminorm $\rho$ on $X$. In particular, it is $\T$-to-$\rho_0$ uniform homeomorphism.
\end{lemma}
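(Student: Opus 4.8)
The goal is to show that the identity on $D$ is $\rho_0$-to-$\rho$ Lipschitz for an \emph{arbitrary} continuous seminorm $\rho$, not only for those dominating $\rho_0$. The plan is to reduce the general case to the case already covered by the preceding Corollary. Given an arbitrary continuous seminorm $\rho$, I would simply replace it by $\rho' = \rho + \rho_0$. This is again a continuous seminorm on $X$, and it satisfies $\rho' \ge \rho_0$, so by the Corollary the identity on $D$ is $\rho_0$-to-$\rho'$ Lipschitz, say with constant $L$. Since $\rho \le \rho'$ pointwise, for any $u,v \in D$ we get $\rho(u-v) \le \rho'(u-v) \le L\,\rho_0(u-v)$, which is exactly the desired Lipschitz estimate for $\rho$ with the same constant $L$.

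For the second assertion, that the identity $(D,\T) \to (D,\rho_0)$ is a uniform homeomorphism, I would argue in both directions. The map $(D,\rho_0)\to (D,\T)$ is uniformly continuous: a basic $\T$-uniform entourage is determined by a continuous seminorm $\rho$, and the first part of the lemma provides a constant $L$ with $\rho(u-v)\le L\rho_0(u-v)$ for all $u,v\in D$, so small $\rho_0$-distance forces small $\rho$-distance. Conversely, the map $(D,\T)\to(D,\rho_0)$ is uniformly continuous simply because $\rho_0$ is itself a continuous seminorm on $X$, hence $\rho_0$-closeness is one of the defining $\T$-uniform conditions (more precisely, it is the restriction to $D$ of the uniformly continuous identity $(X,\T)\to(X,\rho_0)$). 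Combining the two directions gives that the identity on $D$ is a $\T$-to-$\rho_0$ uniform homeomorphism.

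I do not anticipate a serious obstacle here; the content of the lemma is essentially a bookkeeping step that upgrades the Corollary from seminorms dominating $\rho_0$ to all continuous seminorms, by the trivial device of passing to $\rho+\rho_0$. The only point requiring a moment's care is making sure the final uniform-homeomorphism claim is phrased correctly — one must check that $D$, as a uniform subspace of $(X,\T)$, has its uniformity generated by the restrictions of the seminorms of $X$, so that controlling each $\rho$ on $D$ suffices; this is immediate from the definition of the uniformity of a locally convex space.

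\begin{proof}
Let $\rho$ be an arbitrary continuous seminorm on $X$ and put $\rho'=\rho+\rho_0$. Then $\rho'$ is a continuous seminorm on $X$ with $\rho'\ge\rho_0$, so by the previous corollary there is a constant $L>0$ such that $\rho'(u-v)\le L\,\rho_0(u-v)$ for all $u,v\in D$. Since $\rho\le\rho'$ pointwise, we get $\rho(u-v)\le L\,\rho_0(u-v)$ for all $u,v\in D$, which proves the first assertion.

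It remains to prove that the identity map is a $\T$-to-$\rho_0$ uniform homeomorphism on $D$. The uniformity of $(X,\T)$ is generated by its continuous seminorms, and hence the uniformity of $D$ as a subspace of $(X,\T)$ is generated by the restrictions to $D$ of these seminorms. Since $\rho_0$ is one of them, the identity $(D,\T)\to(D,\rho_0)$ is uniformly continuous. Conversely, for any continuous seminorm $\rho$ on $X$ the first part of the lemma gives a constant $L>0$ with $\rho(u-v)\le L\,\rho_0(u-v)$ for $u,v\in D$; this shows that the identity $(D,\rho_0)\to(D,\T)$ is uniformly continuous. Therefore the identity on $D$ is a $\T$-to-$\rho_0$ uniform homeomorphism.
\end{proof}
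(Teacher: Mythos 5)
Your proof is correct and follows essentially the same route as the paper: replacing $\rho$ by $\rho+\rho_0$ to invoke the corollary for seminorms dominating $\rho_0$, then deducing the uniform homeomorphism from the Lipschitz estimates in one direction and the continuity of $\rho_0$ in the other. Nothing is missing.
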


\begin{proof}  Let $\rho$ be a continuous seminorm. Set $\rho_1=\rho+\rho_0$. Then $\rho_1$ is a continuous seminorm satisfying $\rho_1\ge\rho_0$. So, by the above corollary, the identity on $D$ is $\rho_0$-to-$\rho_1$ Lipschitz. Hence, it is a fortiori $\rho_0$-to-$\rho$ Lipschitz.

It follows that the identity on $D$ is $\rho_0$-to-$\T$ uniformly continuous. Conversely, since $\rho_0$ is a continuous seminorm, the identity is clearly $\tau$-to-$\rho_0$ uniformly continuous.
\end{proof}

The next step is the following lemma:

\begin{lemma} There is a uniformly continuous retraction $r:X\to D$ which is also $\rho_0$-to-$\rho_0$ Lipschitz.
\end{lemma}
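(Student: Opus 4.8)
The plan is to build the retraction $r\colon X\to D$ as a composition: first a ``nearest-point-type'' selection mapping onto the canonical copy $\Delta\subset\ell_1$, and then the bi-Lipschitz identification between $\Delta$ and $(D,\rho_0)$ furnished by the corollary. Concretely, since $D$ (equipped with $\rho_0$) is bi-Lipschitz isomorphic to $\Delta$, and since $\Delta$ is a nice piecewise-linear subset of $\ell_1$ (a countable union of triangles sharing the vertex $0$), I would first observe that there is a Lipschitz retraction $\varrho\colon\ell_1\to\Delta$. This is elementary: on each triangle $\co\{0,e_n,e_{n+1}\}$ one has an explicit Lipschitz projection, and these can be glued along the common edges $\co\{0,e_n\}$; alternatively one can invoke a general fact that finite-dimensional-at-each-point polyhedral sets in Banach spaces admit Lipschitz retractions, but the explicit construction for $\Delta$ is cleanest and is exactly the kind of thing done in \cite{LS}. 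Transporting $\varrho$ through the bi-Lipschitz map gives a $\rho_0$-to-$\rho_0$ Lipschitz retraction of $\overline{D}^{\rho_0}$ (or of the $\rho_0$-closed linear span of the $x_n$) onto $D$.

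The remaining issue is to extend this from a subspace to all of $X$. Here I would use that $\rho_0$ is a continuous seminorm, so $X/\ker\rho_0$ is a normed space and it suffices to work there; the closed subspace $Y$ generated by the images of the $x_n$ contains the image of $D$, and I want a Lipschitz retraction $X/\ker\rho_0\to$ (image of $D$). One route: the image of $D$ lies in the finitely-generated-at-each-point set, and since $Y$ itself need not be complemented, instead of retracting $X$ onto $Y$ I would retract directly onto the image of $D$ using the fact that this set, viewed in $\ell_1$ via the bi-Lipschitz map, is $\Delta$, together with a Lipschitz retraction of the whole space onto $\Delta$. For the latter, note $\Delta\subset B_{\ell_1}$ and one can compose with the radial retraction onto $B_{\ell_1}$ (which is $1$-Lipschitz into the ball in any normed space — but we are in $\ell_1$) and then with $\varrho$; what one actually needs is a Lipschitz retraction of $\ell_1$ onto $\Delta$, which exists as noted. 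Pulling everything back: define $r$ on $X$ by passing to $X/\ker\rho_0$, composing with the quotient map's factorization, the (Lipschitz) retraction onto the $\rho_0$-span of the $x_n$ — but since such a linear retraction may fail to exist, the honest construction is to retract onto $\Delta$ after first mapping $X/\ker\rho_0$ into $\ell_1$; this last map, however, is not given for free.

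Reconsidering, the cleanest correct plan is: work entirely with the seminorm $\rho_0$; the set $D$ with metric $\rho_0$ is isometric (up to bi-Lipschitz constants) to $\Delta\subset\ell_1$; by Lemma~\ref{lm-lip} the $\T$-topology and the $\rho_0$-topology induce the same uniformity on $D$, so a $\rho_0$-to-$\rho_0$ Lipschitz retraction $X\to D$ is automatically $\T$-to-$\T$ uniformly continuous. To produce such a retraction, factor the bi-Lipschitz map $\Phi\colon(D,\rho_0)\to\Delta$ and seek a Lipschitz map $X\to\Delta$ extending $\Phi$; by the Lipschitz extension property this reduces to extending each scalar coordinate functional $x\mapsto\langle\Phi(\cdot),e_k^*\rangle$ from $D$ to $X$, which is possible by the (real-valued) McShane/Whitney Lipschitz extension theorem applied to the seminorm $\rho_0$, after checking the resulting coordinatewise map still lands in $\Delta$ — this landing condition is the delicate point and is handled exactly as in \cite{LS} by using the structure of $\Delta$ (the support of a point is contained in two consecutive basis vectors) and composing with a final Lipschitz retraction $\ell_1\to\Delta$. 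Then $r=\Phi^{-1}\circ(\text{this map})$ is the desired retraction.

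\medskip
\noindent The main obstacle I expect is precisely this last step — guaranteeing that the coordinatewise Lipschitz extensions assemble into a map with values in $\Delta$ (not merely in $\ell_1$), and controlling the Lipschitz constant of the composition with the retraction $\ell_1\to\Delta$. This is the technical heart, and it is where one must lean on the explicit geometry of $\Delta$ from \cite[Section 5]{LS} rather than on soft functional-analytic principles.
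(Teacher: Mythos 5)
Your overall architecture is sound in its soft parts: reducing everything to the seminorm $\rho_0$, using the bi-Lipschitz identification $\Phi$ of $(D,\rho_0)$ with $\Delta$, and observing that a $\rho_0$-to-$\rho_0$ Lipschitz retraction onto $D$ is automatically $\T$-uniformly continuous thanks to Lemma~\ref{lm-lip}. The genuine gap is in the step that actually produces the retraction. You need a $\rho_0$-Lipschitz map from all of $X$ into $\Delta$ extending $\Phi$, and your proposed route via coordinatewise McShane--Whitney extensions does not deliver it: McShane controls each scalar coordinate separately, but the assembled map $x\mapsto(\tilde\phi_n(x))_n$ need not take values in $\ell_1$ at all, and even after truncations it need not be Lipschitz into $\ell_1$, since the $\ell_1$-distance is the \emph{sum} of the coordinate increments and off $D$ there is no summable control of the individual Lipschitz constants. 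Extension of $\ell_1$-valued Lipschitz maps is not available by such soft arguments; that is precisely why the statement has content. Your fallback --- ``handled exactly as in \cite{LS} using the structure of $\Delta$'' --- is not an argument but a pointer to the very fact you need, namely that a bi-Lipschitz copy of $\Delta$ sitting in an arbitrary normed space is a Lipschitz retract of that space. Likewise, the ``elementary gluing'' of triangle projections to get a Lipschitz retraction $\ell_1\to\Delta$ is not as immediate as you suggest (nearest-point maps in $\ell_1$ are not single-valued, and gluing over infinitely many triangles requires a uniform constant); that retraction, too, is really an instance of the same result of \cite{LS}.

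For comparison, the paper's proof sidesteps all of this: it passes to the quotient normed space $Y=(X,\rho_0)/\rho_0^{-1}(0)$ via $q\circ\jmath$ (where $\jmath$ is the identity $(X,\T)\to(X,\rho_0)$ and $q$ the quotient map), notes that $q(\jmath(D))$ is bi-Lipschitz isomorphic to $\Delta$ (using Lemma~\ref{lm-lip} to see that $\jmath|_D$ is a uniform homeomorphism and that $q$ is an isometry on $\jmath(D)$), and then invokes \cite[Proposition 1]{LS}, which says exactly that a subset of a normed space Lipschitz equivalent to $\Delta$ is a Lipschitz retract of the whole space; the desired $r$ is then $(q\circ\jmath|_D)^{-1}\circ r_0\circ q\circ\jmath$. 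So to complete your proof you must either cite that proposition (in which case your detour through $\ell_1$-valued extensions is unnecessary) or reprove it using the specific geometry of $\Delta$; the McShane step as written would fail.
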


\begin{proof}
Let $\jmath:(X,\T)\to (X,\rho_0)$ be the identity mapping. Further,
let $Y$ be the quotient $(X,\rho_0)/\rho_0^{-1}(0)$. Then $Y$ is a normed space. Let $q:(X,\rho_0)\to Y$ denote the quotient mapping.
Then $q\circ \jmath$ is a continuous linear mapping, hence it is uniformly continuous.

Moreover, $\jmath$ is a uniform homeomorphism of $D$ onto $\jmath(D)$ and
$q$ is an isometry of $\jmath(D)$ onto $q(\jmath(D))$. It follows that $q(\jmath(D))$ is bi-Lipschitz isomorphic to $\Delta$. So, by \cite[Proposition 1]{LS} there is a Lipschitz retraction
$r_0: Y\to q(\jmath(D))$. Then
$$r= (q\circ \jmath|_D)^{-1}\circ r_0\circ q\circ \jmath$$
is a uniformly continuous retraction of $(X,\tau)$ onto $D$.
Moreover, $r$ is $\rho_0$-to-$\rho_0$ Lipschitz as $q\circ\jmath$ has this property, $r_0$ is Lipschitz and $q\circ\jmath|_D$ is an isometry of $(D,\rho_0)$ into $Y$.
\end{proof}

Now we are ready to finish the proof:

Since $(D,\rho_0)$ is bi-Lipschitz isomorphic to $\Delta$, by Lemma~\ref{lm:Delta} there is a Lipschitz map $g_0:(D,\rho_0)\to (D,\rho_0)$ without approximate fixed point net with respect to $\rho_0$. Further, let $r$ be the retraction from the previous lemma. Set
$$f=g_0\circ r|_C.$$
Then $f$ is a selfmap of $C$ which is $\rho_0$-to-$\rho_0$ Lipschitz. By Lemma~\ref{lm-lip} it is $\rho_0$-to-$\rho$ Lipschitz for any continuous seminorm $\rho$ (as $f(C)\subset D$). Further it has no approximate fixed point net. To see this, it is enough to find an $\varepsilon>0$ such that $\rho_0(x-f(x))\ge\varepsilon$ for each $x\in C$. This can be done imitating the proof of \cite[Lemma on p. 634]{LS}:

Let $L$ be the $\rho_0$-to-$\rho_0$ Lipschitz constant of $f$.
Let $\eta=\inf_{x\in q(\jmath(C))}\|x-g_0(x)\|$. Set
$$\varepsilon=\frac{\eta}{L+2}.$$
Fix an arbitrary $x\in C$. If $\dist_{\rho_0}(x,D)\ge\varepsilon$,
then $\rho_{0}(x-f(x))\ge\varepsilon$ as $f(x)\in D$.
If $\dist_{\rho_0}(x,D)<\varepsilon$, find $y\in D$ with $\rho_0(x-y)<\varepsilon$. Then
$$\begin{aligned}\rho_0(x-f(x))&\ge \rho_0(y-f(y))-\rho_0(x-y)-\rho_0(f(x)-f(y))
\ge \eta-(1+L)\rho_0(x-y) \\&> (L+2)\varepsilon-(1+L)\varepsilon=\varepsilon.\end{aligned}$$
This completes the proof.\hfill $\square$

\bigskip

\noindent{\bf Acknowledgment.} The research described in this paper was started while the first author was visiting Universidade Federal do Amazonas, Manaus-AM, during the period: August 2011 to November 2011. He wishes to thank Professor Renato Tribuzy and all the other members at the Department of Mathematics for their hospitality. We wish also to thank S.~Park for kindly sending us a copy of his paper \cite{Park}.

\vskip1truecm

\scshape

\noindent Cleon S. Barroso and Michel P. Rebou\c cas

\noindent Universidade Federal do Cear\'a

\noindent {Departamento} de Matem\'atica

\noindent Av. Humberto Monte S/N, 60455-760, Bl 914

\noindent E-mail address: \qquad {\tt cleonbar@mat.ufc.br}\quad and \quad {\tt michelufc@yahoo.com.br}

\scshape
\bigskip

\noindent Ond\v{r}ej F. K. Kalenda

\noindent Charles University

\noindent Faculty of Mathematics and Physics, Department of Mathematical Analysis

\noindent Sokolovsk\'a 83, 186 75, Praha 8, Czech Republic

\noindent E-mail address: \qquad {\tt kalenda@karlin.mff.cuni.cz}


\begin{thebibliography}{00}

\bibitem{AGP} O. Arino, S. Gautier and J. P. Penot, A fixed point theorem for sequentially continuous mappings
with application to ordinary differential equations, Funkcial. Ekvac., {\bf 27} (1984), 273–-279.

\bibitem{Barroso} C. S. Barroso, The approximate fixed point property in Hausdorff topological vector spaces and applications, Discrete Cont. Dyn. Syst., {\bf 25} (2009), 467--479.

\bibitem{Barroso-Lin} C. S. Barroso and P-K. Lin, On the weak approximate fixed point property, J. Math. Anal. Appl., {\bf 365} (2010), 171--175.

\bibitem{BKL} C. S. Barroso, O. F. K. Kalenda and P.-K. Lin, On the approximate fixed point property in abstract spaces, To appear in Math. Z. (2011) DOI: 10.1007/s00209-011-0915-6.


\bibitem{Fan61} K. Fan, A Generalization of Tychonoff's Fixed Point
Theorem, Math. Annal. {\bf 142} (1961), 305--310.

\bibitem{GJZ} L. Gajek, J. Jachymski, D. Zagrodny, Fixed point and approximate fixed point theorems for non-affine maps. J. Appl. Anal. {\bf 1} (1995), no. 2, {205-–211}.

\bibitem{Hadzic1} O. Had\v{z}i\'c, Some fixed point and almost fixed point theorems for multivalued mappings in topological vector spaces, Nonlinear Anal. {\bf 5} (1981), 1009--1019.

\bibitem{Hadzic2} O. Had\v{z}i\'c, Almost fixed point and best approximations theorems in $H$-spaces, Bull. Austral. Math. Soc. {\bf 53} (1996), no. 3, 447--454.

\bibitem{HV} M. Hazewinkel and M. van de Vel, On almost-fixed-point theory, Canad. J. Math. {\bf 30} (1978), 673--699,

\bibitem{Himmelberg} C. J. Himmelberg, Fixed points of compact multifunctions, J. Math. Anal. Appl. {\bf 38} (1972),
205–-207.

\bibitem{Idzik1} A. Idzik, On $\gamma$-almost fixed point theorems. The single-valued case, Bull. Polish Acad. Sci. Math. {\bf 35} (1987), no. 7--8, 461--464.

\bibitem{Idzik2} A. Idzik, Almost fixed point theorems, Proc. Amer. Math. Soc. {\bf 104} (1988), {779-–784}.

\bibitem{klee} V. Klee, Some topological properties of convex sets, Trans. Amer. Math. Soc. {\bf 78} (1955), 30-45.

\bibitem{KKM} B. Knaster, C. Kuratowski and S. Mazurkiewicz, Ein Beweis des Fixpunkt-satzes f\"ur $n$-dimensionale simplexe, Fundamenta Math. {\bf 14}, (1929), 132--137.

\bibitem{LS} P.-K. Lin and Y. Sternfeld, Convex sets with the Lipschitz fixed point property are compact, Proc. Amer. Math. Soc., {\bf 93} (1985), 633--639.


\bibitem{Park} S. Park, D. H. Tan, Remarks on the Schauder-Tychonoff fixed point theorem, Vietnam J. Math. {\bf 28} (2000), no. 2, 127-–132.

\bibitem{Park2} S. Park, Almost fixed points of multimaps having totally bounded ranges, Nonlinear Anal. {\bf 51} (2002), no. 1, 1--9.



\end{thebibliography}
\end{document}